\DeclareMathOperator*{\bboxplus}{\scalerel*{\boxplus}{1\over 1}}
  \newfont\fiverm{cmr5} 
\newtheorem{thm}{Theorem}[section]
\newtheorem{lem}[thm]{Lemma}
\newtheorem{thm-con}[thm]{Theorem-Conjecture}
\numberwithin{equation}{section}
\theoremstyle{definition}
\newcommand{\f}{\Bbb F}
\begin{document}

\title[Number of Affine Equivalence Classes of Boolean Functions]{On the Number of Affine Equivalence Classes of Boolean Functions and $q$-ary Functions}

\author[Xiang-dong Hou]{Xiang-dong Hou}
\address{Department of Mathematics and Statistics,
University of South Florida, Tampa, FL 33620}
\email{xhou@usf.edu}

\keywords{affine linear group, Boolean function, compound matrix, finite field, Reed-Muller code}

\subjclass[2010]{06E30, 20G40, 94B05, 94D10}

\begin{abstract}
Let $R_q(r,n)$ be the $r$th order $q$-ary Reed-Muller code of length $q^n$, which is the set of functions from $\f_q^n$ to $\f_q$ represented by polynomials of degree $\le r$ in $\f_q[X_1,\dots,X_n]$. The affine linear group $\text{AGL}(n,\Bbb F_q)$ acts naturally on $R_q(r,n)$. We derive two formulas concerning the number of orbits of this action: (i) an explicit formula for the number of AGL orbits of $R_q(n(q-1),n)$, and (ii) an asymptotic formula for the number of AGL orbits of $R_2(n,n)/R_2(1,n)$. The number of AGL orbits of $R_2(n,n)$ has been numerically computed by several authors for $n\le 31$; the binary case of result (i) is a theoretic solution to the question. Result (ii) answers a question by MacWilliams and Sloane. 
\end{abstract}
\maketitle

\section{Introduction}

Let $\f_q$ be the finite field with $q$ elements and let $\mathcal F(\f_q^n,\f_q)$ denote the set of all functions from $\f_q^n$ to $\f_q$. Every $g\in\mathcal F(\f_q^n,\f_q)$ is (uniquely) represented by a polynomial $f\in\f_q[X_1,\dots,X_n]$ with $\deg_{X_i}f<q$ for all $1\le i\le n$; we define $\deg g=\deg f$. We shall not distinguish a function from $\f_q^n$ to $\f_q$ and a polynomial in $\f_q[X_1,\dots,X_n]$ that represents it. For $-1\le r\le n(q-1)$, the $r$th order Reed-Muller code of length $q^n$ is
\begin{equation}\label{1.1}
R_q(r,n)=\{f\in\mathcal F(\f_q^n:\f_q):\deg f\le r\}.
\end{equation}
Note that $\mathcal F(\f_q^n,\f_q)=R_q(n(q-1),n)$.
Let 
\begin{equation}\label{1.2}
\text{AGL}(n,\f_q)=\Bigl\{\left[\begin{matrix}A&0\cr a&1\end{matrix}\right]: A\in\text{GL}(n,\f_q),\ a\in\f_q^n\Bigr\}
\end{equation}
be the affine linear group of degree $n$ over $\f_q$. The set $\mathcal F(\f_q^n,\f_q)$ is an $\f_q$-algebra on which $\text{AGL}(n,\f_q)$ acts as automorphisms: For $\alpha=\left[\begin{smallmatrix}A&0\cr a&1\end{smallmatrix}\right]\in \text{AGL}(n,\f_q)$ and $f(X_1,\dots,X_n)\in\mathcal F(\f_q^n,\f_q)$,
\[
\alpha(f)=f((X_1,\dots,X_n)A+a).
\]
Consequently, $\text{AGL}(n,\f_q)$ acts on $R_q(r,n)$ and on $R_q(r,n)/R_q(s,n)$ for $-1\le s\le r\le n(q-1)$.

When $q=2$, we write $R_2(r,n)=R(r,n)$. In this case, $\mathcal F(\f_2^n,\f_2)=R(n,n)$ is the set of all Boolean functions in $n$ variables. When two Boolean functions are said to be {\em equivalent}, it is meant, depending on different authors, that $f$ and $g$ are in the same AGL orbit of $\mathcal F(\f_2^n,\f_2)$ \cite{Budaghyan-Carlet-Pott-IEEE-IT-2006}, or $f+R(1,n)$ and $g+R(1,n)$ are in the same AGL orbit of $R(n,n)/R(1,n)$ \cite{MacWilliams-Sloane-1977}. (The affine equivalence in the latter sense is referred to as {\em extended affine equivalence} in \cite{Budaghyan-Carlet-Pott-IEEE-IT-2006}.) Most coding theoretic and cryptographic properties of Boolean functions are preserved under affine equivalence. Let $\frak N_{q,n}$ and $\frak M_n$ denote the number of AGL orbits of $\mathcal F(\f_q^n,\f_q)$ and the number of AGL orbits of $R(n,n)/R(1,n)$, respectively. The number $\frak N_{q,n}$ has been computed by several authors for $q=2$ and $n$ up to 31 \cite{Harrison-JSIAM-1964, Hou-JA-1995, Nechiporuk-DKNSSSR-1958, Zhang-Yang-Hung-Zhang-IEEE-C-2016, Zivkovic-Caric-IEEE-IT-2021}. We will derive an explicit formula for $\frak N_{q,n}$, hence providing a theoretic solution to the question. Our approach differs from those in some previous works in that we do not use the cycle index, the generating function in P\'olya's counting; rather, we find direct application of Burnside's lemma more suitable for this particular question. The number $\frak M_n$ has also been studied by a number of authors \cite{Berlekamp-Welch-IEEE-IT-1972, Hou-JA-1995, MacWilliams-Sloane-1977, Maiorana-MC-1991, Strazdins-AAM-1997, Zeng-Yang-arXiv1912.11189}. (In fact, the number of AGL orbits of $R(r,n)/R(s,n)$ has been computed recently for all $-1\le s<r\le n\le 10$ \cite{Zeng-Yang-arXiv1912.11189}.) In \cite{Maiorana-MC-1991}, Maiorana not only computed $\frak M_6$, but also classified $R(6,6)/R(1,6)$. However, no explicit formula for $\frak M_n$ is known in general. An open question by MacWilliams and Sloane \cite[Research Problem (14.2)]{MacWilliams-Sloane-1977} asks how fast the number $\frak M_n$ grows with $n$. We will give an asymptotic formula for $\frak M_n$ as $n\to\infty$. Both $\frak N_{2,n}$ and $\frak M_n$ are important sequences; in the {\em On-line Encyclopedia of Integer sequences} \cite{oeis}, they are listed as A000214 and A001289, respectively.

The paper is organized as follows: Section 2 is a review of some mathematical results to be used in the paper. In Section 3, we derive an explicit formula for $\frak N_{q,n}$.  The asymptotic formula for $\frak M_n$ is proved in Section 4. We conclude the paper with a few brief remarks in Section 5 and a conclusion in Section 6. For readers' convenience, a list of notations used in the paper is complied in the appendix.

\section{Mathematical Background}

\subsection{Burnside's lemma}\

Let $G$ be a finite group acting on a finite set $X$. For $x\in X$, the subset $Gx=\{ax:a\in G\}\subset X$ is called the $G$-orbit of $x$. The $G$-orbits form a partition of $X$, and the number of $G$-orbits is given by the following formula referred to as Burnside's lemma:
\begin{equation}\label{bl}
\text{number of $G$-orbits}=\frac 1{|G|}\sum_{a\in G}\text{Fix}(a),
\end{equation}
where $\text{Fix}(a)=|\{x\in X:ax=x\}|$ is the number of fixed points of $a$ in $X$. If $a,b\in G$ are conjugate to each other, that is, $b=gag^{-1}$ for some $g\in G$, then $\text{Fix}(a)=\text{Fix}(b)$. Let $a_1,\dots,a_k$ be the representatives of the conjugacy classes of $G$ and let $[a_i]$ denote the conjugacy class of $a_i$. Then we have $|[a_i]|=|G|/|c(a_i)|$, where
\[
c(a_i)=\{g\in G:ga_i=a_ig\}
\]
is the {\em centralizer} of $a_i$ in $G$. Therefore \eqref{bl} can be more effectively computed as follows:
\begin{equation}\label{bl-1}
\text{number of $G$-orbits}=\frac 1{|G|}\sum_{i=1}^k|[a_i]|\text{Fix}(a_i)=\sum_{i=1}^k\frac{\text{Fix}(a_i)}{|c(a_i)|}.
\end{equation}

\subsection{Rational canonical form of a matrix}\

Let $\f$ be any field and let $f(X)=X^n+a_{n-1}X^{n-1}+\cdots+a_0\in\f[X]$ be a monic polynomial of degree $n$. A {\em companion matrix} of $f$ is an $n\times n$ matrix $A$ over $\f$ whose minimal polynomial is $f$; one can choose
\[
A=\left[
\begin{matrix}
0&1\cr
&0&1\cr
&&\cdot&\cdot\cr
&&&\cdot&\cdot\cr
&&&&0&1\cr
-a_0&\kern-0.5em -a_1&\cdot&\cdot&\cdot&\kern-0.5em -a_{n-1}
\end{matrix}\right].
\]
Every square matrix $A$ over $\f$ is similar (conjugate) to a {\em rational canonical form}
\[
\left[\begin{matrix}A_1\cr &\ddots\cr && A_m\end{matrix}\right],
\]
where each $A_i$ is a companion matrix of some $f_i\in\f[X]$ which is a power of an irreducible polynomial over $\f$. The polynomials $f_1,\dots,f_m$ are the {\em elementary divisors} of $A$. Two square matrices are similar if and only if they have the same list (multiset) of elementary divisors.

\subsection{Conjugacy classes of $\text{AGL}(n,\f_q)$}\

In this subsection, we describe the representatives of the conjugacy classes of $\text{AGL}(n,\f_q)$ and recall the formulas for the sizes of the centralizers of these representatives. These results can be found in \cite[\S 6.4]{Hou-ams-gsm-2018}.

A {\em partition} is a sequence of nonnegative integers $\lambda=(\lambda_1,\lambda_2,\dots)$ with only finitely many nonzero terms. We define $|\lambda|=\sum_{i\ge 1}i\lambda_i$ and $T(\lambda)=\{i:\lambda_i>0\}$. For example, if $\lambda=(2,0,1,3)$, then $|\lambda|=2\cdot 1+0\cdot 2+1\cdot 3+3\cdot 4=17$ and $T(\lambda)=\{1,3,4\}$. Let $\mathcal P$ denote the set of all partitions. Let $\mathcal I$ be the set of all monic irreducible polynomials in $\f_q[X]\setminus\{X\}$. For $f\in\mathcal I$ and $\lambda=(\lambda_1,\lambda_2,\dots)\in\mathcal P$, let $f^\lambda$ denote the multiset
\[
\{\underbrace{f^1,\dots,f^1}_{\lambda_1},\underbrace{f^2,\dots,f^2}_{\lambda_2},\dots\},
\]
i.e., the list with $\lambda_1$ copies of $f$, $\lambda_2$ copies of $f^2$, and so on.
Let $\sigma_{f^{\lambda}}$ be an element of $\text{GL}(|\lambda|\deg f,\f_q)$ with elementary divisors $f^\lambda$. For $\alpha=\left[\begin{smallmatrix}A&0\cr a&1\end{smallmatrix}\right]\in\text{AGL}(n_1,\f_q)$ and $\beta=\left[\begin{smallmatrix}B&0\cr b&1\end{smallmatrix}\right]\in\text{AGL}(n_2,\f_q)$, define
\[
\alpha\boxplus\beta=\left[\begin{matrix} A\cr &B\cr a&b&1\end{matrix}\right]\in\text{AGL}(n_1+n_2,\f_q).
\]
Let
\begin{equation}\label{2.6.0}
N_n=\left[\begin{matrix}
0&1\cr
&\cdot&\cdot\cr
&&\cdot&\cdot\cr
&&&\cdot&1\cr
&&&&0\end{matrix}\right]_{n\times n},\qquad J_n=I+N_n,
\end{equation}
where $I$ is the  identity matrix. For $\lambda=(\lambda_1,\lambda_2,\dots)\in\mathcal P$, let
\begin{equation}\label{sig-lam}
\sigma_\lambda=\underbrace{\left[\begin{smallmatrix} \displaystyle J_1\;&\vspace{1mm}\cr\displaystyle 0\;&\displaystyle 1\end{smallmatrix}\right]\boxplus\cdots\boxplus
\left[\begin{smallmatrix} \displaystyle J_1\;&\vspace{1mm}\cr\displaystyle 0\;&\displaystyle 1\end{smallmatrix}\right]}_{\lambda_1}\boxplus
\underbrace{\left[\begin{smallmatrix} \displaystyle J_2\;&\vspace{1mm}\cr\displaystyle 0\;&\displaystyle 1\end{smallmatrix}\right]\boxplus\cdots\boxplus
\left[\begin{smallmatrix} \displaystyle J_2\;&\vspace{1mm}\cr\displaystyle 0\;&\displaystyle 1\end{smallmatrix}\right]}_{\lambda_2}\boxplus\cdots\in\text{AGL}(|\lambda|,F)
\end{equation}
and, for $t\in T(\lambda)$,
\begin{align}\label{sig-lam-t}
\sigma_{\lambda,t}=\,& \underbrace{\left[\begin{smallmatrix} \displaystyle J_1\;&\vspace{1mm}\cr\displaystyle 0\;&\displaystyle 1\end{smallmatrix}\right]\boxplus\cdots\boxplus
\left[\begin{smallmatrix} \displaystyle J_1\;&\vspace{1mm}\cr\displaystyle 0\;&\displaystyle 1\end{smallmatrix}\right]}_{\lambda_1}\boxplus\cdots\boxplus
\underbrace{\left[\begin{smallmatrix} \displaystyle J_{t-1}\;&\vspace{1mm}\cr\displaystyle 0\;&\displaystyle 1\end{smallmatrix}\right]\boxplus\cdots\boxplus
\left[\begin{smallmatrix} \displaystyle J_{t-1}\;&\vspace{1mm}\cr\displaystyle 0\;&\displaystyle 1\end{smallmatrix}\right]}_{\lambda_{t-1}}\\
&\boxplus\underbrace{\left[\begin{smallmatrix} \displaystyle J_t\;&\vspace{0.7mm}\cr\displaystyle \epsilon_t\;&\displaystyle 1\end{smallmatrix}\right]\boxplus
\left[\begin{smallmatrix} \displaystyle J_t\;&\vspace{1mm}\cr\displaystyle 0\;&\displaystyle 1\end{smallmatrix}\right]\boxplus\cdots\boxplus
\left[\begin{smallmatrix} \displaystyle J_t\;&\vspace{1mm}\cr\displaystyle 0\;&\displaystyle 1\end{smallmatrix}\right]}_{\lambda_t}\cr
&\boxplus \underbrace{\left[\begin{smallmatrix} \displaystyle J_{t+1}\;&\vspace{1mm}\cr\displaystyle 0\;&\displaystyle 1\end{smallmatrix}\right]\boxplus\cdots\boxplus
\left[\begin{smallmatrix} \displaystyle J_{t+1}\;&\vspace{1mm}\cr\displaystyle 0\;&\displaystyle 1\end{smallmatrix}\right]}_{\lambda_{t+1}}\boxplus\cdots\in \text{AGL}(|\lambda|,F),\nonumber
\end{align}
where 
\[
\epsilon_t=(1,0,\dots,0)\in\f_q^t.
\] 
By \cite[Theorem~6.23]{Hou-ams-gsm-2018}, a set of representatives of the conjugacy classes of $\text{AGL}(n,\f_q)$ is given by $\mathcal C=\mathcal C_1\cup\mathcal C_2$, where
\begin{equation}\label{C1}
\mathcal C_1=\Bigl\{ \sigma_\lambda\boxplus\Bigl(\bboxplus_{f\in\mathcal I}\sigma_{f^{\lambda_f}}\Bigr):\lambda,\lambda_f\in\mathcal P,\ |\lambda|+\sum_{f\in\mathcal I}|\lambda_f|\deg f=n\Bigr\},
\end{equation}
\begin{equation}\label{C2}
\mathcal C_2=\Bigl\{ \sigma_{\lambda,t}\boxplus\Bigl(\bboxplus_{f\in\mathcal I}\sigma_{f^{\lambda_f}}\Bigr):\lambda,\lambda_f\in\mathcal P,\ |\lambda|>0,\ t\in T(\lambda),\ |\lambda|+\sum_{f\in\mathcal I}|\lambda_f|\deg f=n\Bigr\}.
\end{equation}
We shall refine the description of $\mathcal C_1$ and $\mathcal C_2$ to serve the purpose of the present paper.
For $f\in\mathcal I$, the {\em order} of $f$, denoted by $\text{ord}\,f$, is the multiplicative order of the roots of $f$. If $\text{ord}\,f=d$, then $\deg f=o_d(q)$, the multiplicative order of $q$ in $\Bbb Z/d\Bbb Z$. For $d\ge 1$ with $p\nmid d$, where $p=\text{char}\,\f_q$, let $I_d=\{f\in\mathcal I: \text{ord}\,f=d\}$. Then $|I_d|=\psi(d):=\phi(d)/o_d(q)$, where $\phi$ is the Euler totient function. We order the partitions in the following manner: For $\lambda=(\lambda_1,\lambda_2,\dots), \eta=(\eta_1,\eta_2,\dots)\in\mathcal P$, ``$\lambda<\eta$'' means that $|\lambda|<|\eta|$, or $|\lambda|=|\eta|$ and for the largest $i$ such that $\lambda_i\ne\eta_i$ we have $\lambda_i<\eta_i$. (Partitions in this particular order can be easily generated by computer.) Let $D=\{d>1:d \mid q^i-1\ \text{for some}\ 1\le i\le n\}$.   For $d\in D$, let 
\[
\Lambda_d=\{(\boldsymbol{\lambda}^{(1)},\dots,\boldsymbol{\lambda}^{(\psi(d))}):\boldsymbol{\lambda}^{(i)}\in\mathcal P,\ \boldsymbol{\lambda}^{(1)}\le\cdots\le\boldsymbol{\lambda}^{(\psi(d))}\},
\]
and for $\boldsymbol{\lambda}=(\boldsymbol{\lambda}^{(1)},\dots,\boldsymbol{\lambda}^{(\psi(d))})\in\Lambda_d$, let $|\boldsymbol{\lambda}|=\sum_{i=1}^{\psi(d)}|\boldsymbol{\lambda}^{(i)}|$.
Let
\[
\Omega=\Bigl\{(\lambda,(\boldsymbol{\lambda}_d)_{d\in D}):\lambda\in\mathcal P,\, \boldsymbol{\lambda}_d\in\Lambda_d,\,|\lambda|+\!\sum_{d\in D}\!o_d(q)|\boldsymbol{\lambda}_d|=n\Bigr\}.
\]
Then
\begin{align}\label{C1-1}
\mathcal C_1=\bigcup_{(\lambda,(\boldsymbol{\lambda}_d)_{d\in D})\in\Omega}\Bigl\{ &\sigma_\lambda\boxplus\Bigl(\bboxplus_{d\in D}\Bigl(\bboxplus_{f\in I_d}\sigma_{f^{\lambda_f}} \Bigr) \Bigr):\\
&\lambda_f\in\mathcal P,\ (\lambda_f)_{f\in I_d}\ \text{is a permutation of}\ \boldsymbol{\lambda}_d\Bigr\} \nonumber
\end{align}
and
\begin{align}\label{C2-1}
\mathcal C_2=\bigcup_{\substack{(\lambda,(\boldsymbol{\lambda}_d)_{d\in D})\in\Omega\cr |\lambda|>0}}\Bigl\{ &\sigma_{\lambda,t}\boxplus\Bigl(\bboxplus_{d\in D}\Bigl(\bboxplus_{f\in I_d}\sigma_{f^{\lambda_f}} \Bigr) \Bigr):\\
&t\in T(\lambda),\ \lambda_f\in\mathcal P,\ (\lambda_f)_{f\in I_d}\ \text{is a permutation of}\ \boldsymbol{\lambda}_d\Bigr\}. \nonumber
\end{align}
In $\mathcal C_1$, let
\begin{equation}\label{2.10}
\alpha=\sigma_\lambda\boxplus\Bigl(\bboxplus_{d\in D}\Bigl(\bboxplus_{f\in I_d}\sigma_{f^{\lambda_f}} \Bigr) \Bigr),
\end{equation}
and in $\mathcal C_2$, let
\begin{equation}\label{2.12}
\beta=\sigma_{\lambda,t}\boxplus\Bigl(\bboxplus_{d\in D}\Bigl(\bboxplus_{f\in I_d}\sigma_{f^{\lambda_f}} \Bigr) \Bigr).
\end{equation}
The sizes of the centralizers of $\alpha$ and $\beta$ in $\text{AGL}(n,\f_q)$ are given by \cite[Theorem~6.24]{Hou-ams-gsm-2018}. 
Write $\lambda=(\lambda_1,\lambda_2,\dots)$ and $\boldsymbol{\lambda}_d=(\boldsymbol{\lambda}_d^{(1)},\dots,\boldsymbol{\lambda}_d^{(\psi(d))})$, where $\boldsymbol{\lambda}_d^{(i)}=(\boldsymbol{\lambda}_{d,1}^{(i)},\boldsymbol{\lambda}_{d,2}^{(i)},\dots)\in\mathcal P$.
We have 
\begin{align}\label{2.13}
&|c(\alpha)|:=a_1(\lambda,(\boldsymbol{\lambda}_d)_{d\in D})=\\
&q^{\sum_j\lambda_j+\sum_{j,k}\min(j,k)\lambda_j\lambda_k+\sum_{d\in D}o_d(q)\sum_{i=1}^{\psi(d)}\sum_{j,k}\min(j,k)\boldsymbol{\lambda}^{(i)}_{d,j}\boldsymbol{\lambda}^{(i)}_{d,k}}\cr
&\cdot\Bigl(\prod_j\prod_{l=1}^{\lambda_j}(1-q^{-l})\Bigr)\Bigl(\prod_{d\in D}\prod_{i=1}^{\psi(d)}\prod_j\prod_{u=1}^{\boldsymbol{\lambda}^{(i)}_{d,j}}(1-q^{-o_d(q)u})\Bigr), \nonumber
\end{align}
\begin{align}\label{2.14}
&|c(\beta)|:=a_2(t,\lambda,(\boldsymbol{\lambda}_d)_{d\in D})=\\
&\frac1{q^{\lambda_t}-1}q^{\sum_{j\ge t}\lambda_j+\sum_{j,k}\min(j,k)\lambda_j\lambda_k+\sum_{d\in D}o_d(q)\sum_{i=1}^{\psi(d)}\sum_{j,k}\min(j,k)\boldsymbol{\lambda}^{(i)}_{d,j}\boldsymbol{\lambda}^{(i)}_{d,k}}\cr
&\cdot\Bigl(\prod_j\prod_{l=1}^{\lambda_j}(1-q^{-l})\Bigr)\Bigl(\prod_{d\in D}\prod_{i=1}^{\psi(d)}\prod_j\prod_{u=1}^{\boldsymbol{\lambda}^{(i)}_{d,j}}(1-q^{-o_d(q)u})\Bigr). \nonumber
\end{align}

\noindent{\bf Example.}
In \eqref{2.10} and \eqref{2.12}, let $q=2$, $\lambda=(3,0,1)$, $t=3$ and 
\[
\bboxplus_{d\in D}\Bigl(\bboxplus_{f\in I_d}\sigma_{f^{\lambda_f}} \Bigr)=\sigma_{f_1}^{\lambda_{f_1}}\boxplus \sigma_{f_2}^{\lambda_{f_1}},
\]
where $f_1=X^3+X+1$, $f_2=X^3+X^2+1\in\mathcal I$, $\lambda_{f_1}=(1,2)$ and $\lambda_{f_2}=(2,0,1)$. Let's see what $\alpha$ and $\beta$ look like. We have $\text{ord}\,f_1=\text{ord}\,f_2=7$, i.e., $f_1,f_2\in I_7$. Hence ${\boldsymbol\lambda}_7=({\boldsymbol\lambda}_7^{(1)},{\boldsymbol\lambda}_7^{(2)})$, where ${\boldsymbol\lambda}_7^{(1)}=(1,2)$ and ${\boldsymbol\lambda}_7^{(2)}=(2,0,1)$, and ${\boldsymbol\lambda}_d=((0),\dots,(0))$ for $7\ne d\in D$. Let $A_i^{(j)}$ be a companion matrix of $f_i^j$. Then
\begin{align*}
\alpha=\,&\left[\begin{smallmatrix} \displaystyle J_1\;&\vspace{1mm}\cr\displaystyle 0\;&\displaystyle 1\end{smallmatrix}\right]
\boxplus 
\left[\begin{smallmatrix} \displaystyle J_1\;&\vspace{1mm}\cr\displaystyle 0\;&\displaystyle 1\end{smallmatrix}\right]
\boxplus
\left[\begin{smallmatrix} \displaystyle J_1\;&\vspace{1mm}\cr\displaystyle 0\;&\displaystyle 1\end{smallmatrix}\right]
\boxplus
\left[\begin{smallmatrix} \displaystyle J_3\;&\vspace{1mm}\cr\displaystyle 0\;&\displaystyle 1\end{smallmatrix}\right]\cr
&\boxplus
\left[\begin{smallmatrix} \displaystyle A_1^{(1)}\;&\vspace{1mm}\cr\displaystyle 0\;&\displaystyle 1\end{smallmatrix}\right]
\boxplus
\left[\begin{smallmatrix} \displaystyle A_1^{(2)}\;&\vspace{1mm}\cr\displaystyle 0\;&\displaystyle 1\end{smallmatrix}\right]
\boxplus
\left[\begin{smallmatrix} \displaystyle A_1^{(2)}\;&\vspace{1mm}\cr\displaystyle 0\;&\displaystyle 1\end{smallmatrix}\right]
\boxplus
\left[\begin{smallmatrix} \displaystyle A_2^{(1)}\;&\vspace{1mm}\cr\displaystyle 0\;&\displaystyle 1\end{smallmatrix}\right]
\boxplus
\left[\begin{smallmatrix} \displaystyle A_2^{(1)}\;&\vspace{1mm}\cr\displaystyle 0\;&\displaystyle 1\end{smallmatrix}\right]
\boxplus
\left[\begin{smallmatrix} \displaystyle A_2^{(3)}\;&\vspace{1mm}\cr\displaystyle 0\;&\displaystyle 1\end{smallmatrix}\right],
\end{align*}
\begin{align*}
\beta=\,&\left[\begin{smallmatrix} \displaystyle J_1\;&\vspace{1mm}\cr\displaystyle 0\;&\displaystyle 1\end{smallmatrix}\right]
\boxplus 
\left[\begin{smallmatrix} \displaystyle J_1\;&\vspace{1mm}\cr\displaystyle 0\;&\displaystyle 1\end{smallmatrix}\right]
\boxplus
\left[\begin{smallmatrix} \displaystyle J_1\;&\vspace{1mm}\cr\displaystyle 0\;&\displaystyle 1\end{smallmatrix}\right]
\boxplus
\left[\begin{smallmatrix} \displaystyle J_3\;&\vspace{1mm}\cr\displaystyle \epsilon_3\;&\displaystyle 1\end{smallmatrix}\right]\cr
&\boxplus
\left[\begin{smallmatrix} \displaystyle A_1^{(1)}\;&\vspace{1mm}\cr\displaystyle 0\;&\displaystyle 1\end{smallmatrix}\right]
\boxplus
\left[\begin{smallmatrix} \displaystyle A_1^{(2)}\;&\vspace{1mm}\cr\displaystyle 0\;&\displaystyle 1\end{smallmatrix}\right]
\boxplus
\left[\begin{smallmatrix} \displaystyle A_1^{(2)}\;&\vspace{1mm}\cr\displaystyle 0\;&\displaystyle 1\end{smallmatrix}\right]
\boxplus
\left[\begin{smallmatrix} \displaystyle A_2^{(1)}\;&\vspace{1mm}\cr\displaystyle 0\;&\displaystyle 1\end{smallmatrix}\right]
\boxplus
\left[\begin{smallmatrix} \displaystyle A_2^{(1)}\;&\vspace{1mm}\cr\displaystyle 0\;&\displaystyle 1\end{smallmatrix}\right]
\boxplus
\left[\begin{smallmatrix} \displaystyle A_2^{(3)}\;&\vspace{1mm}\cr\displaystyle 0\;&\displaystyle 1\end{smallmatrix}\right].
\end{align*}
Note that $n=|\lambda|+3|\boldsymbol{\lambda}_7|=6+3(5+5)=36$, so $\alpha,\beta\in\text{AGL}(36,\f_2)$. By \eqref{2.13} and \eqref{2.14} (with $o_7(2)=3$),
\begin{align*}
|c(\alpha)|=\,&2^{4+3^2+2\cdot 3+3\cdot 1+3(1+2\cdot 2+2\cdot2^2+2^2+2\cdot 2+3\cdot 1)} \cr
&\cdot(1-2^{-1})(1-2^{-2})(1-2^{-3})(1-2^{-1})(1-2^{-3\cdot 1})(1-2^{-3\cdot 1})(1-2^{-3\cdot 2})\cr
&\cdot (1-2^{-3\cdot 1})(1-2^{-3\cdot 2})(1-2^{-3\cdot 1})\cr
=\,&2^{63}\cdot 3^5\cdot 7^7,
\end{align*}
\begin{align*}
|c(\beta)|=\,&\frac 1{2^1-1} 2^{1+3^2+2\cdot 3+3\cdot 1+3(1+2\cdot 2+2\cdot2^2+2^2+2\cdot 2+3\cdot 1)} \cr
&\cdot(1-2^{-1})(1-2^{-2})(1-2^{-3})(1-2^{-1})(1-2^{-3\cdot 1})(1-2^{-3\cdot 1})(1-2^{-3\cdot 2})\cr
&\cdot (1-2^{-3\cdot 1})(1-2^{-3\cdot 2})(1-2^{-3\cdot 1})\cr
=\,&2^{60}\cdot 3^5\cdot 7^7.
\end{align*}

\section{A Formula for $\frak N_{q,n}$}

The objective of this section is to derive an explicit formula for $\frak N_{q,n}$, the number of $\text{AGL}(n,\f_q)$ orbits of $\mathcal F(\f_q^n,\f_q)$. Although parts of the proof are rather technical, the general strategy is quite simple.

\subsection{Strategy}\

We shall identify $\text{GL}(n,\f_q)$ with the subgroup $\{\left[\begin{smallmatrix}A&0\cr 0&1\end{smallmatrix}\right]:A\in\text{GL}(n,\f_q)\}$ of $\text{AGL}(n,\f_q)$. For $\gamma=\left[\begin{smallmatrix}A&0\cr a&1\end{smallmatrix}\right]\in\text{AGL}(n,\f_q)$, we can identify $\gamma$ with the affine map $\f_q^n\to\f_q$, $x\mapsto xA+a$, whence $\gamma(f)=f\circ\gamma$ for $f\in\mathcal F(\f_q^n,\f_q)$. For $\gamma\in\text{AGL}(n,\f_q)$, define
\begin{equation}\label{2.1}   
\text{Fix}(\gamma)=|\{f\in\mathcal F(\f_q^n,\f_q):\gamma(f)=f\}|
\end{equation}
and let $c(\gamma)$ denote the centralizer of $\gamma$ in $\text{AGL}(n,\f_q)$.
Let $\mathcal C$ be a set of representatives of the conjugacy classes of $\text{AGL}(n,\f_q)$. By Burnside's lemma,
\begin{equation}\label{2.4}
\frak N_{q,n}=\sum_{\gamma\in\mathcal C}\frac {\text{Fix}(\gamma)}{|c(\gamma)|}.
\end{equation}
We choose $\mathcal C=\mathcal C_1\cup\mathcal C_2$, where $\mathcal C_1$ and $\mathcal C_2$ are given in \eqref{C1-1} and \eqref{C2-1}, respectively. Then \eqref{2.4} becomes
\begin{align}\label{Nqn}
\frak N_{q,n}=\,&\sum_{\gamma\in\mathcal C_1}\frac {\text{Fix}(\gamma)}{|c(\gamma)|}+\sum_{\gamma\in\mathcal C_2}\frac {\text{Fix}(\gamma)}{|c(\gamma)|}\\
=\,&\sum_{(\lambda,(\boldsymbol{\lambda}_d)_{d\in D})\in\Omega}\sum_{\substack{(\lambda_f)_{f\in I_d},\ \lambda_f\in\mathcal P,\cr (\lambda_f)_{f\in I_d}\ \text{is a permutation of}\ \boldsymbol{\lambda}_d}}\frac{\text{Fix}(\alpha)}{|c(\alpha)|}\cr
&+\sum_{\substack{(\lambda,(\boldsymbol{\lambda}_d)_{d\in D})\in\Omega\cr t\in T(\lambda)}}\sum_{\substack{(\lambda_f)_{f\in I_d},\ \lambda_f\in\mathcal P,\cr (\lambda_f)_{f\in I_d}\ \text{is a permutation of}\ \boldsymbol{\lambda}_d}}\frac{\text{Fix}(\beta)}{|c(\beta)|}, \nonumber
\end{align}
where $\alpha$ and $\beta$ are given in \eqref{2.10} and \eqref{2.12}, respectively, and   $|c(\alpha)|$ and $|c(\beta)|$ are given in \eqref{2.13} and \eqref{2.14}, respectively. 

It remains to determine $\text{Fix}(\alpha)$ and $\text{Fix}(\beta)$. {\em For entire Section 3, symbols $\alpha$ and $\beta$ are reserved for the elements of $\text{\rm AGL}(n,\f_q)$ defined in \eqref{2.10} and \eqref{2.12}.}

\subsection{Number of fixed points of $\alpha$ and $\beta$}\

The objective of this subsection is to determine $\text{Fix}(\alpha)$ and $\text{Fix}(\beta)$.
In general, for $\gamma\in\text{AGL}(n,\f_q)$ and $f\in\mathcal F(\f_q^n,\f_q)$, $\gamma(f)=f$ if and only if $f\circ\gamma=f$; this happens if and only if $f$ is constant on every $\langle\gamma\rangle$-orbit in $\f_q^n$, where $\langle\gamma\rangle$ is the cyclic group generated by $\gamma$. Hence
\begin{equation}\label{2.24}
\text{Fix}(\gamma)=q^{e(\gamma)},
\end{equation}
where $e(\gamma)$ is the number of $\langle\gamma\rangle$-orbits in $\f_q^n$. Let
\begin{equation}\label{fix}
\text{fix}(\gamma)=|\{x\in\f_q^n:\gamma(x)=x\}|.
\end{equation}
(We remind the reader that $\text{fix}(\gamma)$ is the number fixed points of $\gamma$ in $\f_q^n$, while $\text{Fix}(\gamma)$ is the number fixed points of $\gamma$ in $\mathcal F(\f_q^n,\f_q)$.)
By Burnside's lemma (yes, another application of Burnside's lemma),
\[
e(\gamma)=\frac 1{o(\gamma)}\sum_{k=1}^{o(\gamma)}\text{fix}(\gamma^k).
\]
Note that $\text{fix}(\gamma^k)=\text{fix}(\gamma^{\text{gcd}(k,o(\gamma))})$ and that for each $l\mid o(\gamma)$, the number of $k$ ($1\le k\le o(\gamma)$) such that $\text{gcd}(k,o(\gamma))=l$ is $\phi(o(\gamma)/l)$. Hence
\begin{equation}\label{2.25}
e(\gamma)=\frac 1{o(\gamma)}\sum_{k\mid o(\gamma)}\phi(o(\gamma)/k)\,\text{fix}(\gamma^k).
\end{equation}
In the next four lemmas, we first compute $o(\gamma)$ and $\text{fix}(\gamma^k)$ when $\gamma$ is a component of $\alpha$ or $\beta$, then we determine $o(\alpha)$ and $o(\beta)$, and finally we determine $\text{Fix}(\alpha)$ and $\text{Fix}(\beta)$. We will see that $\text{Fix}(\alpha)$ depends only on $(\lambda,(\boldsymbol{\lambda}_d)_{d\in D})$ and $\text{Fix}(\beta)$ depends only on $(t,\lambda,(\boldsymbol{\lambda}_d)_{d\in D})$.

Let $o(\ )$ denote the order of a group element and $\nu(\ )$ denote the $p$-adic order of integers, where $p=\text{char}\,\f_q$.

\begin{lem}\label{L2.1}
Let $f\in I_d$, where $d\in D\cup\{1\}$, and let $A\in\text{\rm GL}(o_d(q)l,\f_q)$ be a companion matrix of $f^l$, treated as an element of $\text{\rm AGL}(o_d(q)l,\f_q)$. Then $o(A)=dp^{\lceil\log_pl\rceil}$ and 
\begin{equation}\label{2.17}
\text{\rm fix}(A^k)=q^{\min(p^{\nu(k)},l)\epsilon(d,k)},\qquad  k\ge0,
\end{equation}
where
\begin{equation}\label{2.18}
\epsilon(d,k)=\begin{cases}
o_d(q)&\text{if}\ d\mid k,\cr
0&\text{if}\ d\nmid k.
\end{cases}
\end{equation}
\end{lem}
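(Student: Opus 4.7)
The plan is to exploit the $\f_q[X]$-module structure on $\f_q^{o_d(q)l}$ induced by $A$. Since $A$ is a companion matrix of $f^l$, the module $\f_q^{o_d(q)l}$ is cyclic with annihilator $(f^l)$, so $\f_q^{o_d(q)l}\cong \f_q[X]/(f^l)$ with $X$ acting as $A$. The order $o(A)$ is the smallest $m\ge 1$ with $f^l\mid X^m-1$. Writing $m=m'p^s$ with $p\nmid m'$ and recalling that $p\nmid d$, we have $X^m-1=(X^{m'}-1)^{p^s}$ with $X^{m'}-1$ separable, so $f$ occurs in $X^m-1$ with multiplicity $p^s$ if $d\mid m'$ and with multiplicity $0$ otherwise. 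Hence $f^l\mid X^m-1$ iff $d\mid m'$ and $p^s\ge l$; minimizing in $m'$ and $p^s$ yields $m'=d$ and $p^s=p^{\lceil\log_p l\rceil}$, so $o(A)=d\,p^{\lceil\log_p l\rceil}$.

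For the fixed-point count, $A$ viewed in $\text{AGL}(o_d(q)l,\f_q)$ has zero translation part, so $\text{fix}(A^k)=|\ker(A^k-I)|=q^{\dim\ker(A^k-I)}$. Under the isomorphism $\f_q^{o_d(q)l}\cong\f_q[X]/(f^l)$, the operator $A^k-I$ becomes multiplication by $X^k-1$, and a short ideal-theoretic computation shows that the kernel has dimension $\deg\gcd(f^l,X^k-1)$. Since $f$ is irreducible, $\gcd(f^l,X^k-1)=f^j$ with $j=\min\bigl(l,\,\text{mult}_f(X^k-1)\bigr)$. If $d\nmid k$, then $f\nmid X^k-1$, so $j=0$ and $\text{fix}(A^k)=1$, which agrees with $\epsilon(d,k)=0$. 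If $d\mid k$ and $k\ge 1$, write $k=k'p^{\nu(k)}$ with $p\nmid k'$; then $d\mid k'$ and $X^k-1=(X^{k'}-1)^{p^{\nu(k)}}$ with $X^{k'}-1$ separable and divisible by $f$, giving $\text{mult}_f(X^k-1)=p^{\nu(k)}$. Therefore $j=\min(p^{\nu(k)},l)$ and $\text{fix}(A^k)=q^{j\deg f}=q^{\min(p^{\nu(k)},l)\,o_d(q)}$, matching the claim. The boundary case $k=0$ falls under $d\mid k$ with the convention $p^{\nu(0)}=\infty$, so $\min(p^{\nu(0)},l)=l$ and the formula recovers $\text{fix}(I)=q^{l\,o_d(q)}$, the total size of $\f_q^{o_d(q)l}$.

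The main technical point is tracking how the characteristic $p$ affects the factorization of $X^m-1$ and $X^k-1$; the rest is elementary linear algebra, divisibility in $\f_q[X]$, and the translation of kernel dimensions into gcd degrees via the cyclic module description. No deep obstacle appears, so the lemma should reduce cleanly to these observations.
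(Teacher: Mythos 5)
Your proposal is correct and follows essentially the same route as the paper: both rest on the identity $\dim\ker(A^k-I)=\deg\gcd(X^k-1,f^l)$ (which the paper cites from \cite[Lemma~6.11]{Hou-ams-gsm-2018} and you rederive via the cyclic module $\f_q[X]/(f^l)$) together with the factorization $X^k-1=(X^{k_1}-1)^{p^{\nu(k)}}$ in characteristic $p$. The only cosmetic differences are that you obtain $o(A)$ directly from the minimal condition $f^l\mid X^m-1$ while the paper reads it off from the full-nullity case of the fixed-point formula, and that you handle $k=0$ explicitly via the convention $\nu(0)=\infty$.
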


\begin{proof}
Write $k=p^{\nu(k)}k_1$, where $p\nmid k_1$. Then by \cite[Lemma~6.11]{Hou-ams-gsm-2018}, the nullity of $A^k-I$ is
\begin{align*}
\text{null}(A^k-I)&=\deg\text{gcd}(X^k-1,f^l)=\deg\text{gcd}\bigl((X^{k_1}-1)^{p^{\nu(k)}},f^l\bigr)\cr
&=\min(p^{\nu(k)},l)\deg\text{gcd}(X^{k_1}-1,f)\cr
&=\min(p^{\nu(k)},l)\epsilon(d,k),
\end{align*}
which gives \eqref{2.17}. Note that 
\[
\begin{array}{rcl}
A^k=I &\Leftrightarrow & \min(p^{\nu(k)},l)\epsilon(d,k)=o_d(q)l\cr
&\Leftrightarrow & d\mid k\ \text{and}\ p^{\nu(k)}\ge l\cr
&\Leftrightarrow & d\mid k\ \text{and}\ \nu(k)\ge \log_pl.
\end{array}
\]
Hence $o(A)=dp^{\lceil\log_pl\rceil}$.
\end{proof}

\begin{lem}\label{L2.2}
Let $\sigma=\left[\begin{smallmatrix}J_m&0\cr \epsilon_m&1\end{smallmatrix}\right]\in\text{\rm AGL}(m,\f_q)$. Then $o(\sigma)=p^{1+\lfloor\log_p m\rfloor}$ and for $k\ge 0$,
\begin{equation}\label{2.19}
\text{\rm fix}(\sigma^k)=\begin{cases}
q^m&\text{if}\ \nu(k)\ge 1+\lfloor\log_p m\rfloor,\cr
0&\text{if}\ \nu(k)< 1+\lfloor\log_p m\rfloor.
\end{cases}
\end{equation}
\end{lem}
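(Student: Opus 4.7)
The plan is to reduce the claim to Lemma~\ref{L2.1} by viewing $\sigma$ as an element of $\text{GL}(m+1,\f_q)$ via the natural embedding $\text{AGL}(m,\f_q)\hookrightarrow\text{GL}(m+1,\f_q)$, identifying it with a single Jordan block of size $m+1$.

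First I would verify the block formula
\[
(\sigma-I)^j=\left[\begin{matrix} N_m^j & 0\cr \epsilon_m N_m^{j-1} & 0\end{matrix}\right],\qquad j\ge 1,
\]
so that $(\sigma-I)^{m+1}=0$ while $(\sigma-I)^m\ne 0$, and $\sigma-I$ has rank $m$ (its nonzero rows are a permutation of $e_1,\dots,e_m$). This makes $\sigma-I$ a single nilpotent Jordan block of size $m+1$, so $\sigma$ has elementary divisor $(X-1)^{m+1}$ and is $\text{GL}(m+1,\f_q)$-conjugate to $J_{m+1}$. Applying Lemma~\ref{L2.1} with $d=1$, $l=m+1$ gives $o(\sigma)=p^{\lceil\log_p(m+1)\rceil}$, and a short case check (distinguishing $m=p^s$ from $p^s<m<p^{s+1}$) confirms $\lceil\log_p(m+1)\rceil=1+\lfloor\log_p m\rfloor$.

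Next, to extract $\text{fix}(\sigma^k)$ from the linear fixed-point count of $\sigma^k$ on $\f_q^{m+1}$, I would partition $\f_q^{m+1}$ by the value $y$ of the last coordinate. The equation $(x,y)\sigma^k=(x,y)$ becomes $x(J_m^k-I)=-y\,b_k$, where $b_k=\epsilon_m(I+J_m+\cdots+J_m^{k-1})\in\f_q^m$ is independent of $y$. The slice $y=0$ contributes $\text{fix}(J_m^k)$ fixed points; for every $y\in\f_q^*$ the equation has either $\text{fix}(J_m^k)$ or $0$ solutions, and the two cases occur simultaneously for all nonzero $y$ since the right-hand sides lie in a single one-dimensional span. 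The $y=1$ count is by definition $\text{fix}(\sigma^k)$, so
\[
\text{fix}(J_{m+1}^k)=\text{fix}(J_m^k)+(q-1)\,\text{fix}(\sigma^k).
\]
Lemma~\ref{L2.1} evaluates $\text{fix}(J_{m+1}^k)=q^{\min(p^{\nu(k)},m+1)}$ and $\text{fix}(J_m^k)=q^{\min(p^{\nu(k)},m)}$. Solving gives $\text{fix}(\sigma^k)=q^m$ when $p^{\nu(k)}>m$ and $0$ otherwise, matching \eqref{2.19} since $p^{\nu(k)}>m$ is equivalent to $\nu(k)\ge 1+\lfloor\log_p m\rfloor$.

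The main hurdle is the identification $\sigma\sim J_{m+1}$ in $\text{GL}(m+1,\f_q)$; once the explicit form of $(\sigma-I)^j$ is established, the rest reduces to routine bookkeeping with Lemma~\ref{L2.1}.
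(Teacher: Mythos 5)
Your proof is correct, but it takes a genuinely different route from the paper. The paper works directly inside $\text{AGL}(m,\f_q)$: it expands $J_m^k=(I+N_m)^k$ and $I+J_m+\cdots+J_m^{k-1}$ by the binomial theorem, reads off that $\sigma^k=\text{id}$ exactly when $\binom k1=\cdots=\binom km=0$ in $\f_q$ (equivalently $p^{\nu(k)}>m$), and then shows the affine fixed-point equation $\epsilon_m(I+J_m+\cdots+J_m^{k-1})=-x(J_m^k-I)$ is solvable precisely under the same binomial-vanishing condition, so the count jumps between $q^m$ and $0$. You instead embed $\sigma$ into $\text{GL}(m+1,\f_q)$, verify via $(\sigma-I)^j=\left[\begin{smallmatrix} N_m^j & 0\cr \epsilon_m N_m^{j-1} & 0\end{smallmatrix}\right]$ that $\sigma-I$ is nilpotent of index $m+1$, hence $\sigma\sim J_{m+1}$, and then recycle Lemma~\ref{L2.1} (with $d=1$, $l=m+1$ and $l=m$) together with the slicing identity $\text{fix}(J_{m+1}^k)=\text{fix}(J_m^k)+(q-1)\,\text{fix}(\sigma^k)$, whose justification (each nonzero slice is either empty or a coset of $\ker(J_m^k-I)$, with solvability independent of the nonzero value of the last coordinate) is sound. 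Your route buys economy: the order statement and the dichotomy in \eqref{2.19} fall out of an already-proved lemma, with the only arithmetic being the identity $\lceil\log_p(m+1)\rceil=1+\lfloor\log_p m\rfloor$, which you correctly flag and which does hold for all $m\ge1$. The paper's route buys self-containedness and an explicit description of $\sigma^k$, which makes the ``all or nothing'' nature of the fixed-point set transparent without invoking conjugacy in a larger group. Both arguments cover $k=0$ under the usual convention $\nu(0)=\infty$, and your reduction is complete as stated.
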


\begin{proof}
We have
\[
\sigma^k=\left[\begin{matrix} J_m^k&0\cr \epsilon_m(I+J_m+\cdots+J_m^{k-1})& 1\end{matrix}\right],
\]
where
\[
J_m^k=(I+N_m)^k=\sum_{i=0}^k\binom kiN_m^i
\]
and
\[
I+J_m+\cdots+J_m^{k-1}=\sum_{i=0}^{k-1}\sum_{j=0}^i\binom ijN_m^j=\sum_{j=0}^{k-1}\Bigl(\sum_{i=j}^{k-1}\binom ij\Bigr)N_m^j=\sum_{j=0}^{k-1}\binom k{j+1}N_m^j.
\]
In the above, $J_m^k=I$ if and only if $\binom k1=\cdots=\binom k{m-1}=0$ and $\epsilon_m(I+J_m+\cdots+J_m^{k-1})=0$ if and only if $\binom k1=\cdots=\binom km=0$. Let $\text{id}$ denote the identity of $\text{AGL}(m,\f_2)$. Then
\begin{align*}
\sigma^k=\text{id} &\Leftrightarrow \binom k1=\cdots=\binom km=0 \Leftrightarrow p^{\nu(k)}>m \cr
&\Leftrightarrow \nu(k)>\log_p m  \Leftrightarrow \nu(k)\ge 1+\lfloor\log_p m\rfloor,
\end{align*}
so $o(\sigma)=p^{1+\lfloor\log_p m\rfloor}$.

For $x\in\f_q^m$, the equation $\sigma^k(x)=x$ is equivalent to
\[
\epsilon_m(I+J_m+\cdots+J_m^{k-1})=-x(J_m^k-I),
\]
i.e.,
\[
(\textstyle\binom k1, \textstyle\binom k2,\dots, \textstyle\binom km)=-x\left[
\begin{matrix}0&\binom k1&\cdots&\binom k{m-1}\cr
&\ddots&\ddots&\vdots\cr
&&\ddots&\binom k1\cr
&&&0
\end{matrix}\right].
\]
This holds if and only if $\binom k1=\cdots=\binom km=0$, i.e, $\nu(k)\ge 1+\lfloor\log_p m\rfloor$. Hence we have \eqref{2.19}.
\end{proof}

For $\lambda=(\lambda_1,\lambda_2,\dots)\in\mathcal P$, define $\frak m(\lambda)=\max\{i:\lambda_i>0\}$. For $\boldsymbol{\lambda}=(\boldsymbol{\lambda}^{(1)},\dots,$ $\boldsymbol{\lambda}^{(\psi(d))})\in\Lambda_d$, where $d\in D$, $\boldsymbol{\lambda}^{(i)}\in\mathcal P$, define $\frak m(\boldsymbol{\lambda})=\max_{1\le i\le\psi(d)}\frak m(\boldsymbol{\lambda}^{(i)})$.

\begin{lem}\label{L2.3}
{\rm (i)} We have 
\begin{equation}\label{2.20}
o(\alpha):=b_1(\lambda,(\boldsymbol{\lambda}_d)_{d\in D})=\text{\rm lcm}\{d:|\boldsymbol{\lambda}_d|>0\}\, p^{\lceil \log_p\max(\{\frak m(\lambda)\}\cup\{\frak m(\boldsymbol{\lambda}_d):d\in D\})\rceil}.
\end{equation}
{\rm (}We define $\text{\rm lcm}(\emptyset)=1$.{\rm )}

\medskip

{\rm (ii)} We have 
\begin{align}\label{2.21}
&o(\beta):=b_2(t,\lambda,(\boldsymbol{\lambda}_d)_{d\in D})\\
&=b_1(\lambda,(\boldsymbol{\lambda}_d)_{d\in D})\cdot
\begin{cases}
p&\text{if $t=\frak m(\lambda)\ge\max\{\frak m(\boldsymbol{\lambda}_d):d\in D\}$ and $t$ is a power of $p$},\cr
1&\text{otherwise}.
\end{cases}\nonumber
\end{align}
\end{lem}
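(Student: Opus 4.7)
The plan is to reduce the computation to individual block orders by exploiting how $\boxplus$ interacts with powers. A direct block-matrix calculation, using the fact that $\boxplus$ places $\gamma_1,\gamma_2$ as diagonal blocks with a shared bottom row, yields the formula
\[
(\gamma_1\boxplus\gamma_2)^k=\gamma_1^k\boxplus\gamma_2^k.
\]
Hence $(\gamma_1\boxplus\gamma_2)^k=\mathrm{id}$ if and only if $\gamma_1^k=\mathrm{id}$ and $\gamma_2^k=\mathrm{id}$, so $o(\gamma_1\boxplus\gamma_2)=\mathrm{lcm}(o(\gamma_1),o(\gamma_2))$. By induction the order of any $\boxplus$-product equals the lcm of the component orders.

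I next catalog the orders of the individual blocks appearing in $\alpha$ and $\beta$. Each block $\left[\begin{smallmatrix}J_j&0\\ 0&1\end{smallmatrix}\right]$ of $\sigma_\lambda$ has order equal to $o(J_j)$; since $J_j=I+N_j$ is unipotent and similar to a companion matrix of $(X-1)^j$, with $X-1\in I_1$ of order $1$, Lemma \ref{L2.1} (taken with $d=1$) gives $o(J_j)=p^{\lceil\log_p j\rceil}$. For a block $\left[\begin{smallmatrix}A&0\\ 0&1\end{smallmatrix}\right]$ with $A$ a companion matrix of $f^l$, $f\in I_d$, Lemma \ref{L2.1} gives order $dp^{\lceil\log_p l\rceil}$. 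Finally, the distinguished block $\left[\begin{smallmatrix}J_t&0\\ \epsilon_t&1\end{smallmatrix}\right]$ appearing in $\beta$ has order $p^{1+\lfloor\log_p t\rfloor}$ by Lemma \ref{L2.2}.

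For part (i), I simply take the lcm over all blocks of $\alpha$. Because $\gcd(d,p)=1$ for every $d\in D$, the $p'$-part of the lcm is $\mathrm{lcm}\{d:|\boldsymbol\lambda_d|>0\}$ and the $p$-part is the maximum of $p^{\lceil\log_p j\rceil}$ over the values of $j$ occurring in $T(\lambda)$ and in the various $T(\boldsymbol\lambda_d^{(i)})$; this maximum equals $p^{\lceil\log_p M\rceil}$ with $M=\max(\{\mathfrak{m}(\lambda)\}\cup\{\mathfrak{m}(\boldsymbol\lambda_d):d\in D\})$, giving \eqref{2.20}.

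For part (ii), the block list of $\beta$ differs from that of $\alpha$ only in that one $\left[\begin{smallmatrix}J_t&0\\ 0&1\end{smallmatrix}\right]$ is replaced by $\left[\begin{smallmatrix}J_t&0\\ \epsilon_t&1\end{smallmatrix}\right]$, so
\[
o(\beta)=\mathrm{lcm}\bigl(o(\alpha),\,p^{1+\lfloor\log_p t\rfloor}\bigr),
\]
and $o(\beta)/o(\alpha)$ equals $p$ precisely when $1+\lfloor\log_p t\rfloor>\lceil\log_p M\rceil$, otherwise $1$. The only non-routine step is the case analysis isolating this condition: from $t\leq\mathfrak{m}(\lambda)\leq M$ one has $\lfloor\log_p t\rfloor\leq\lceil\log_p t\rceil\leq\lceil\log_p M\rceil$, and the strict inequality forces equality throughout, hence $\lfloor\log_p t\rfloor=\lceil\log_p t\rceil$ (i.e.\ $t$ is a power of $p$) together with $\lceil\log_p t\rceil=\lceil\log_p M\rceil$; combined with $t\leq M$ the latter pins down $t=M$, which unpacks to $t=\mathfrak{m}(\lambda)\geq\mathfrak{m}(\boldsymbol\lambda_d)$ for every $d\in D$, recovering \eqref{2.21}. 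This final case distinction is the main (but elementary) obstacle; once the identity $o(\gamma_1\boxplus\gamma_2)=\mathrm{lcm}(o(\gamma_1),o(\gamma_2))$ is established, the individual block orders come directly from Lemmas \ref{L2.1} and \ref{L2.2}.
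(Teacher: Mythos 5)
Your proof is correct and follows essentially the same route as the paper: order of a $\boxplus$-sum equals the lcm of the component orders, the block orders come from Lemmas \ref{L2.1} and \ref{L2.2}, and the case analysis in (ii) isolates exactly when the distinguished block's order $p^{1+\lfloor\log_p t\rfloor}$ exceeds the $p$-part $p^{\lceil\log_p M\rceil}$ of $o(\alpha)$. Your packaging of (ii) as $o(\beta)=\mathrm{lcm}\bigl(o(\alpha),p^{1+\lfloor\log_p t\rfloor}\bigr)$, plus the explicit verification of $(\gamma_1\boxplus\gamma_2)^k=\gamma_1^k\boxplus\gamma_2^k$ which the paper takes for granted, are only cosmetic differences.
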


\begin{proof}
(i) By \eqref{2.10}, $\alpha$ is a direct sum of affine transformations $\sigma_\lambda$ and $\sigma_{f^{\lambda_f}}$ ($f\in I_d, d\in D$). Therefore, the order of $\alpha$ equals the lcm of the orders of its components, that is,
\begin{equation}\label{L2.3-eq1}
o(\alpha)=\text{lcm}\Bigl(\{o(\sigma_\lambda)\}\cup\Bigl(\bigcup_{d\in D}\{o(\sigma_{f^{\lambda_f}}):f\in I_d\}\Bigr)\Bigr).
\end{equation}
By \eqref{sig-lam}, $o(\sigma_\lambda)=\text{lcm}\{o(J_i):\lambda_i>0\}$, where $\lambda=(\lambda_1,\lambda_2,\dots)$, and by Lemma~\ref{L2.1} (with $f^l=(x-1)^i$), $o(J_i)=p^{\lceil\log_pi\rceil}$. Thus
\begin{equation}\label{L2.3-eq2}
o(\sigma_\lambda)=p^{\max\{\lceil\log_pi\rceil:\lambda_i>0\}}=p^{\lceil\log_p\frak m(\lambda)\rceil}.
\end{equation}
For $d\in D$ and $f\in I_d$, write $\lambda_f=(\lambda_{f,1},\lambda_{f,2},\dots)$. Since the elementary divisors of $\sigma_{f^{\lambda_f}}$ are
\[
\underbrace{f^1,\dots,f^1}_{\lambda_{f,1}},\, \underbrace{f^2,\dots,f^2}_{\lambda_{f,2}},\dots,
\]
$o(\sigma_{f^{\lambda_f}})=\text{lcm}\{o(A_i):\lambda_{f,i}>0\}$, where $A_i$ is a companion matrix of $f^i$ and $o(A_i)=dp^{\lceil\log_pi\rceil}$ by Lemma~\ref{L2.1}. Thus
\[
o(\sigma_{f^{\lambda_f}})=dp^{\max\{\lceil\log_pi\rceil:\lambda_{f,i}>0\}}=dp^{\lceil\log_p\frak m(\lambda_f)\rceil}.
\]
Since $(\lambda_f)_{f\in I_d}$ is a permutation of $\boldsymbol{\lambda}_d$, we have 
\begin{equation}\label{L2.3-eq3}
\text{lcm}\{o(\sigma_{f^{\lambda_f}}):f\in I_d\}=dp^{\lceil\log_p\frak m(\boldsymbol{\lambda}_d)\rceil}.
\end{equation}
Combining \eqref{L2.3-eq1} -- \eqref{L2.3-eq3} gives
\[
o(\alpha)=\text{\rm lcm}\{d:|\boldsymbol{\lambda}_d|>0\}\, p^{\lceil \log_p\max(\{\frak m(\lambda)\}\cup\{\frak m(\boldsymbol{\lambda}_d):d\in D\})\rceil}.
\]

\medskip
(ii) By \eqref{sig-lam-t},
\[
o(\sigma_{\lambda,t})=\text{lcm}\Bigl(\{o(J_i):i\ne t,\;\lambda_i>0\}\cup\Bigl\{o\Bigl( \left[\begin{smallmatrix} \displaystyle J_t\;&\vspace{0.7mm}\cr\displaystyle \epsilon_t\;&\displaystyle 1\end{smallmatrix}\right]  \Bigr)\Bigr\}\Bigr),
\]
where $o(J_i)=p^{\lceil\log_pi\rceil}$ (Lemma~\ref{L2.1}) and $o( \left[\begin{smallmatrix}  J_t&\cr \epsilon_t& 1\end{smallmatrix}\right])=p^{1+\lfloor\log_pt\rfloor}$ (Lemma~\ref{L2.2}). Hence
\begin{equation}\label{L2.3-eq4}
o(\sigma_{\lambda,t})=p^{\max(\{1+\lfloor\log_pt\rfloor\}\cup\{\lceil\log_pi\rceil:i\ne t,\,\lambda_i>0\})}.
\end{equation}
Now by \eqref{2.12}, \eqref{L2.3-eq3} and \eqref{L2.3-eq4},
\begin{equation}\label{L2.3-eq5}
o(\beta)=\text{lcm}\Bigl(\{o(\sigma_{\lambda,t})\}\cup\Bigl(\bigcup_{d\in D}\{o(\sigma_{f^{\lambda_f}}):f\in I_d\}\Bigr)\Bigr)=\text{lcm}\{d:|\boldsymbol{\lambda}_d|>0\}\, p^e,
\end{equation}
where
\[
e=\max(\{1+\lfloor\log_pt\rfloor\}\cup\{\lceil\log_pi\rceil:i\ne t,\lambda_i>0\}\cup\{\lceil \log_p\frak m(\boldsymbol{\lambda}_d)\rceil:d\in D\}).
\]
When $t<\max(\{\frak m(\lambda)\}\cup\{\frak m(\boldsymbol{\lambda}_d):d\in D\})$, 
\[
1+\lfloor\log_pt\rfloor\le \lceil\log_p\max(\{\frak m(\lambda)\}\cup\{{\frak m}(\boldsymbol{\lambda}_d):d\in D\} )\rceil,
\]
whence
\begin{equation}\label{L2.3-eq6}
e=\lceil\log_p\max(\{\frak m(\lambda)\}\cup\{\frak m(\boldsymbol{\lambda}_d):d\in D\} )\rceil.
\end{equation}
When $t=\max(\{\frak m(\lambda)\}\cup\{\frak m(\boldsymbol{\lambda}_d):d\in D\})$, that is, $t=\frak m(\lambda)\ge \max\{\frak m(\boldsymbol{\lambda}_d):d\in D\}$, 
\[
1+\lfloor\log_pt\rfloor\ge\max\{\lceil\log_pi\rceil:i\ne t,\;\lambda_i>0\}\cup\{\lceil \log_p\frak m(\boldsymbol{\lambda}_d)\rceil:d\in D\},
\]
whence
\begin{equation}\label{L2.3-eq7}
e=1+\lfloor\log_pt\rfloor=\begin{cases}
1+\lceil\log_pt\rceil&\text{if $t$ is a power of $p$},\cr
\lceil\log_pt\rceil&\text{otherwise}.
\end{cases}
\end{equation}
Combining \eqref{L2.3-eq5} -- \eqref{L2.3-eq7} gives \eqref{2.21}.
\end{proof}

We remind the reader that for $\lambda\in\mathcal P$, we write $\lambda=(\lambda_1,\lambda_2,\dots)$, and for $\boldsymbol{\lambda}_d\in\Lambda_d$, we write $\boldsymbol{\lambda}_d=(\boldsymbol{\lambda}_d^{(1)},\dots,\boldsymbol{\lambda}_d^{(\psi(d))})$, where $\boldsymbol{\lambda}_d^{(i)}=(\boldsymbol{\lambda}_{d,1}^{(i)},\boldsymbol{\lambda}_{d,2}^{(i)},\dots)\in\mathcal P$.

\begin{lem}\label{L2.4}
{\rm (i)} We have $\text{\rm Fix}(\alpha)=q^{e_1(\lambda,(\boldsymbol{\lambda}_d)_{d\in D})}$, where
\begin{align}\label{2.22}
&e_1(\lambda,(\boldsymbol{\lambda}_d)_{d\in D})=\\
&\frac 1{b_1(\lambda,(\boldsymbol{\lambda}_d)_{d\in D})}\sum_{k\mid b_1(\lambda,(\boldsymbol{\lambda}_d)_{d\in D})}\phi\bigl(b_1(\lambda,(\boldsymbol{\lambda}_d)_{d\in D})/k \bigr)\cr
&\cdot q^{\sum_{j\ge 1}\min(p^{\nu(k)},j)\lambda_j+\sum_{d\in D,\,d\mid k}o_d(q)\sum_{1\le i\le\psi(d)}\sum_{j\ge 1}\min(p^{\nu(k)},j)\boldsymbol{\lambda}_{d,j}^{(i)}}. \nonumber
\end{align}

{\rm (ii)}  We have $\text{\rm Fix}(\beta)=q^{e_2(t,\lambda,(\boldsymbol{\lambda}_d)_{d\in D})}$, where
\begin{align}\label{2.23}
&e_2(t,\lambda,(\boldsymbol{\lambda}_d)_{d\in D})=\\
&\frac 1{b_2(t,\lambda,(\boldsymbol{\lambda}_d)_{d\in D})}\sum_{\substack{k\mid b_2(t,\lambda,(\boldsymbol{\lambda}_d)_{d\in D})\cr \nu(k)\ge 1+\lfloor\log_pt\rfloor}}\phi\bigl(b_2(t,\lambda,(\boldsymbol{\lambda}_d)_{d\in D})/k \bigr)\cr
&\cdot q^{\sum_{j\ge 1}\min(p^{\nu(k)},j)\lambda_j+\sum_{d\in D,\,d\mid k}o_d(q)\sum_{1\le i\le\psi(d)}\sum_{j\ge 1}\min(p^{\nu(k)},j)\boldsymbol{\lambda}_{d,j}^{(i)}}. \nonumber
\end{align}
\end{lem}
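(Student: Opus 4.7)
The strategy is to apply Burnside's formula \eqref{2.25} together with \eqref{2.24}, using Lemma~\ref{L2.3} for the orders $o(\alpha),o(\beta)$ and Lemmas~\ref{L2.1} and \ref{L2.2} for the fix counts of the individual constituents of $\alpha$ and $\beta$. The key structural point is that $\boxplus$ corresponds to a Cartesian product of affine actions: for $\alpha_i\in\text{AGL}(n_i,\f_q)$, one has $(\alpha_1\boxplus\alpha_2)^k=\alpha_1^k\boxplus\alpha_2^k$ and $\text{fix}(\alpha_1^k\boxplus\alpha_2^k)=\text{fix}(\alpha_1^k)\,\text{fix}(\alpha_2^k)$. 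Since $\text{fix}$ is invariant under conjugation, each linear constituent may be replaced by any similar matrix.

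For part (i), definition \eqref{2.10} presents $\alpha$ as a $\boxplus$-sum of blocks $\bigl[\begin{smallmatrix}J_i&\cr 0&1\end{smallmatrix}\bigr]$ (with multiplicity $\lambda_i$ inside $\sigma_\lambda$) and of the linear constituents $\sigma_{f^{\lambda_f}}$ for $f\in I_d,\, d\in D$. Bringing each $\sigma_{f^{\lambda_f}}$ to rational canonical form inside $\text{GL}$ exhibits it as a further $\boxplus$-sum of companion matrices of $f^j$, each repeated $\lambda_{f,j}$ times. Because $J_i$ is similar to the companion matrix of $(X-1)^i$, Lemma~\ref{L2.1} applies uniformly: a $J_i$-block contributes $q^{\min(p^{\nu(k)},i)}$ (take $d=1$, $o_1(q)=1$, $\epsilon(1,k)=1$), while a companion block of $f^j$ with $f\in I_d$ contributes $q^{\min(p^{\nu(k)},j)\epsilon(d,k)}$. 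Multiplying the contributions and using that $(\lambda_f)_{f\in I_d}$ is a permutation of $\boldsymbol{\lambda}_d$ (so the sum over $f\in I_d$ reindexes as a sum over $i=1,\dots,\psi(d)$) gives
\[
\text{fix}(\alpha^k)=q^{\sum_{j\ge 1}\min(p^{\nu(k)},j)\lambda_j\,+\,\sum_{d\in D,\,d\mid k}o_d(q)\sum_{i=1}^{\psi(d)}\sum_{j\ge 1}\min(p^{\nu(k)},j)\boldsymbol{\lambda}_{d,j}^{(i)}},
\]
using $\epsilon(d,k)=0$ for $d\nmid k$ and $\epsilon(d,k)=o_d(q)$ for $d\mid k$. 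Substituting this and $o(\alpha)=b_1(\lambda,(\boldsymbol{\lambda}_d)_{d\in D})$ from Lemma~\ref{L2.3}(i) into \eqref{2.25}, then invoking \eqref{2.24}, yields \eqref{2.22}.

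For part (ii), the decomposition of $\beta$ differs from that of $\alpha$ in exactly one place: one block $\bigl[\begin{smallmatrix}J_t&\cr 0&1\end{smallmatrix}\bigr]$ is replaced by the twisted block $\bigl[\begin{smallmatrix}J_t&\cr \epsilon_t&1\end{smallmatrix}\bigr]$. By Lemma~\ref{L2.2}, the $k$-th power of the twisted block has $q^t$ fixed points when $\nu(k)\ge 1+\lfloor\log_p t\rfloor$ and no fixed points otherwise. In the first case, $p^{\nu(k)}>t$ forces $\min(p^{\nu(k)},t)=t$, so the twisted block contributes the same $q^t$ that the untwisted block would have contributed; in the second case the entire product over constituents collapses to $0$. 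Consequently the sum \eqref{2.25} for $\beta$ matches that for $\alpha$ (with the same per-$k$ exponent as above) but restricted to divisors $k$ of $o(\beta)$ with $\nu(k)\ge 1+\lfloor\log_p t\rfloor$. Inserting $o(\beta)=b_2(t,\lambda,(\boldsymbol{\lambda}_d)_{d\in D})$ from Lemma~\ref{L2.3}(ii) then gives \eqref{2.23}.

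The work is essentially bookkeeping; the only points one must explicitly check are that the $J_i$ blocks from $\sigma_\lambda$ are indeed captured by Lemma~\ref{L2.1} with $d=1$ (which is \emph{not} an element of $D$ and therefore produces an unconditional term in the exponent), and that once the per-component fix counts are multiplied, the arbitrariness of the permutation $(\lambda_f)_{f\in I_d}$ disappears inside the sum over $i=1,\dots,\psi(d)$. Granted these, part (i) is immediate and part (ii) reduces to the observation on the twisted block.
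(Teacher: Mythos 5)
Your proposal is correct and follows essentially the same route as the paper: multiplicativity of $\text{fix}$ over the $\boxplus$-decomposition, Lemma~\ref{L2.1} (with $d=1$ for the $J_i$ blocks) and Lemma~\ref{L2.2} for the twisted block, reindexing via the permutation $(\lambda_f)_{f\in I_d}\mapsto\boldsymbol{\lambda}_d$, and then substitution of $o(\alpha)=b_1$, $o(\beta)=b_2$ into \eqref{2.25} and \eqref{2.24}, with the sum for $\beta$ restricted to $k$ with $\nu(k)\ge 1+\lfloor\log_pt\rfloor$. No gaps.
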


\begin{proof}
(i) By \eqref{2.24}, it suffices to show that $e(\alpha)=e_1(\lambda,(\boldsymbol{\lambda}_d)_{d\in D})$, where $e(\alpha)$ is given in \eqref{2.25}. Recall from \eqref{2.25} that
\begin{equation}\label{e-alpha}
e(\alpha)=\frac 1{o(\alpha)}\sum_{k\mid o(\alpha)}\phi(o(\alpha)/k)\,\text{fix}(\alpha^k),
\end{equation}
where $o(\alpha)=b_1(\lambda,(\boldsymbol{\lambda}_d)_{d\in D})$.
In the above, by \eqref{2.10},
\[
\text{fix}(\alpha^k)=\text{fix}(\sigma_\lambda^k)\prod_{d\in D}\prod_{f\in I_d}\text{fix}(\sigma_{f^{\lambda_f}}^k),
\]
where $\text{fix}(\sigma_\lambda^k)$ and $\text{fix}(\sigma_{f^{\lambda_f}}^k)$ are computed as follows: By \eqref{sig-lam} and \eqref{2.17},
\[
\text{fix}(\sigma_\lambda^k)=\prod_{j\ge 1}\text{fix}(J_j^k)^{\lambda_j}=\prod_{j\ge 1}q^{\min(p^{\nu(k)},j)\lambda_j}.
\]
Let $A_j$ be a companion matrix of $f^j$ and $\lambda_f=(\lambda_{f,1},\lambda_{f,2},\dots)$. By \eqref{2.17},
\[
\text{fix}(\sigma_{f^{\lambda_f}}^k)=\prod_{j\ge 1}\text{fix}(A_j^k)^{\lambda_{f,j}}=\prod_{j\ge 1}q^{\min(p^{\nu(k)},j)\epsilon(d,k)\lambda_{f,j}}.
\]
Hence
\begin{align}\label{2.26}
\text{fix}(\alpha^k)\,
&=q^{\sum_{j\ge 1}\min(p^{\nu(k)},j)\lambda_j}\prod_{d\in D}\prod_{f\in I_d}q^{\sum_{j\ge 1}\min(p^{\nu(k)},j)\epsilon(d,k)\lambda_{f,j}}\\
&=q^{\sum_{j\ge 1}\min(p^{\nu(k)},j)\lambda_j+\sum_{d\in D}\sum_{1\le i\le\psi(d)}\sum_{j\ge 1}\min(p^{\nu(k)},j)\epsilon(d,k)\boldsymbol{\lambda}_{d,j}^{(i)}}\cr
&\kern 10.4em\text{(since $(\lambda_f)_{f\in I_d}$ is a permutation of $\boldsymbol{\lambda}_d$)}\cr
&=q^{\sum_{j\ge 1}\min(p^{\nu(k)},j)\lambda_j+\sum_{d\in D,\,d\mid k}o_d(q)\sum_{1\le i\le\psi(d)}\sum_{j\ge 1}\min(p^{\nu(k)},j)\boldsymbol{\lambda}_{d,j}^{(i)}}\cr
&\kern 23.2em \text{(by \eqref{2.18})}. \nonumber
\end{align}
Using \eqref{2.26} in \eqref{e-alpha} and comparing the result with \eqref{2.22}, we have $e(\alpha)=e_1(\lambda,(\boldsymbol{\lambda}_d)_{d\in D})$.

\medskip
(ii) It suffices to show that $e(\beta)=e_2(t,\lambda,(\boldsymbol{\lambda}_d)_{d\in D})$.
First, it follows from \eqref{2.19} and \eqref{2.17} that
\[
\text{fix}\Bigl(\left[\begin{smallmatrix} \displaystyle J_t\;& \displaystyle 0 \vspace{0.7mm}\cr\displaystyle \epsilon_t\;&\displaystyle 1\end{smallmatrix}\right]^k \Bigr)=
\begin{cases}
\text{fix}(J_t^k)&\text{if}\ \nu(k)\ge 1+\lfloor\log_pt\rfloor,\cr
0&\text{otherwise}.
\end{cases}
\]
Thus, by comparing $\sigma_\lambda$ and $\sigma_{\lambda,t}$ (\eqref{sig-lam} and \eqref{sig-lam-t}), we have
\[
\text{fix}(\sigma_{\lambda,t}^k)=
\begin{cases}
\text{fix}(\sigma_\lambda^k)&\text{if}\ \nu(k)\ge 1+\lfloor\log_pt\rfloor,\cr
0&\text{otherwise}.
\end{cases}
\]
Hence
\[
\text{fix}(\beta^k)=\text{fix}(\sigma_{\lambda,t}^k)\prod_{d\in D}\prod_{f\in I_d}\text{fix}(\sigma_{f^{\lambda_f}}^k)
=\begin{cases}
\text{fix}(\alpha^k)&\text{if}\ \nu(k)\ge 1+\lfloor\log_pt\rfloor,\cr
0&\text{otherwise}.
\end{cases}
\]
Therefore,
\begin{equation}\label{e-beta}
e(\beta)=\frac 1{o(\beta)}\sum_{k\mid o(\beta)}\phi(o(\beta)/k)\text{fix}(\beta^k)
=\frac 1{o(\beta)}\sum_{\substack{k\mid o(\beta)\cr \nu(k)\ge 1+\lfloor\log_pt\rfloor}}\phi(o(\beta)/k)\text{fix}(\alpha^k).
\end{equation}
In the above, $o(\beta)=b_2(t,\lambda,(\boldsymbol{\lambda}_d)_{d\in D})$ and $\text{fix}(\alpha^k)$ is given in \eqref{2.26}. Now comparing \eqref{e-beta} and \eqref{2.23} gives $e(\beta)=e_2(t,\lambda,(\boldsymbol{\lambda}_d)_{d\in D})$.
\end{proof}

\subsection{The formula for $\frak N_{q,n}$}\

We now assemble the formula for $\frak N_{q,n}$. 

For $\boldsymbol{\lambda}=(\boldsymbol{\lambda}^{(1)},\dots,\boldsymbol{\lambda}^{(\psi(d))})\in\Lambda_d$, let $s(\boldsymbol{\lambda})$ be the number of permutations of $(\boldsymbol{\lambda}^{(1)},\dots,\boldsymbol{\lambda}^{(\psi(d))})$; that is, if $(\boldsymbol{\lambda}^{(1)},\dots,\boldsymbol{\lambda}^{(\psi(d))})$ has $t$ distinct components with respective multiplicities $k_1,\dots,k_t$, then
\begin{equation}\label{s-lam}
s(\boldsymbol{\lambda})=\binom{\psi(d)}{k_1,\dots,k_t}=\frac{\psi(d)!}{k_1!\cdots k_t!}.
\end{equation}

\begin{thm}\label{T2.5}
We have
\[
\frak N_{q,n}=\sum_{(\lambda,(\boldsymbol{\lambda}_d)_{d\in D})\in\Omega}s(\boldsymbol{\lambda}_d)\frac{q^{e_1(\lambda,(\boldsymbol{\lambda}_d)_{d\in D})}}{a_1(\lambda,(\boldsymbol{\lambda}_d)_{d\in D})}+\sum_{\substack{(\lambda,(\boldsymbol{\lambda}_d)_{d\in D})\in\Omega\cr t\in T(\lambda)}}s(\boldsymbol{\lambda}_d)\frac{q^{e_2(t,\lambda,(\boldsymbol{\lambda}_d)_{d\in D})}}{a_2(t,\lambda,(\boldsymbol{\lambda}_d)_{d\in D})},
\]
where $s(\boldsymbol{\lambda}_d)$, $a_1(\lambda,(\boldsymbol{\lambda}_d)_{d\in D})$, $e_1(\lambda,(\boldsymbol{\lambda}_d)_{d\in D})$, $a_2(t,\lambda,(\boldsymbol{\lambda}_d)_{d\in D})$ and $e_2(t,\lambda,(\boldsymbol{\lambda}_d)_{d\in D})$ are given in \eqref{s-lam}, \eqref{2.13}, \eqref{2.22}, \eqref{2.14} and \eqref{2.23}, respectively.
\end{thm}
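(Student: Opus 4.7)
The plan is to combine the Burnside formula \eqref{Nqn} with the formulas for $\text{Fix}(\alpha)$, $\text{Fix}(\beta)$ from Lemma~\ref{L2.4} and the centralizer sizes \eqref{2.13}, \eqref{2.14}, and then collapse the inner sums over permutations $(\lambda_f)_{f\in I_d}$ of $\boldsymbol{\lambda}_d$ into multinomial factors. Formula \eqref{Nqn} already reduces $\frak N_{q,n}$ to
\[
\sum_{(\lambda,(\boldsymbol{\lambda}_d)_{d\in D})\in\Omega}\sum_{(\lambda_f)}\frac{\text{Fix}(\alpha)}{|c(\alpha)|}+\sum_{\substack{(\lambda,(\boldsymbol{\lambda}_d)_{d\in D})\in\Omega\cr t\in T(\lambda)}}\sum_{(\lambda_f)}\frac{\text{Fix}(\beta)}{|c(\beta)|},
\]
where in each inner sum $(\lambda_f)_{f\in I_d}$ ranges over permutations of $\boldsymbol{\lambda}_d$ for every $d\in D$.

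The key observation is that each summand of the inner sums is \emph{independent} of the particular permutation chosen. Inspecting \eqref{2.13}, the exponent of $q$ in $|c(\alpha)|$ involves the quantities $\sum_j\sum_k \min(j,k)\boldsymbol{\lambda}_{d,j}^{(i)}\boldsymbol{\lambda}_{d,k}^{(i)}$ summed over $i$, and the unit factor is $\prod_i\prod_j\prod_u(1-q^{-o_d(q)u})$; both are symmetric in the components $\boldsymbol{\lambda}_d^{(1)},\dots,\boldsymbol{\lambda}_d^{(\psi(d))}$, hence depend only on the multiset $\boldsymbol{\lambda}_d$, not on which polynomial $f\in I_d$ is assigned which partition. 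The same symmetry holds for \eqref{2.14}. Lemma~\ref{L2.4} already presents $\text{Fix}(\alpha)$ and $\text{Fix}(\beta)$ as functions of $(\lambda,(\boldsymbol{\lambda}_d)_{d\in D})$ and $(t,\lambda,(\boldsymbol{\lambda}_d)_{d\in D})$ alone, so this independence is manifest there.

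Given this invariance, each inner sum reduces to the number of permutations times the common summand. The number of permutations of the tuple $\boldsymbol{\lambda}_d=(\boldsymbol{\lambda}_d^{(1)},\dots,\boldsymbol{\lambda}_d^{(\psi(d))})$, grouped by multiplicities of its distinct components, is exactly $s(\boldsymbol{\lambda}_d)$ as defined in \eqref{s-lam}; since permutations for different $d\in D$ are independent, the total multiplicity is $\prod_{d\in D}s(\boldsymbol{\lambda}_d)$, which the statement writes as $s(\boldsymbol{\lambda}_d)$ with a slight abuse of notation. Substituting $\text{Fix}(\alpha)=q^{e_1(\lambda,(\boldsymbol{\lambda}_d)_{d\in D})}$, $\text{Fix}(\beta)=q^{e_2(t,\lambda,(\boldsymbol{\lambda}_d)_{d\in D})}$, $|c(\alpha)|=a_1(\lambda,(\boldsymbol{\lambda}_d)_{d\in D})$, and $|c(\beta)|=a_2(t,\lambda,(\boldsymbol{\lambda}_d)_{d\in D})$, we obtain the claimed formula.

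There is essentially no obstacle: all the substantive work is done in the preceding subsections. The only step that merits care is verifying the permutation invariance of \eqref{2.13} and \eqref{2.14} in the components $\boldsymbol{\lambda}_d^{(i)}$, which justifies both the collapse of the inner sum and the consistency of evaluating these formulas at the sorted representative $\boldsymbol{\lambda}^{(1)}\le\cdots\le\boldsymbol{\lambda}^{(\psi(d))}$ in $\Lambda_d$. Once this is noted, the proof is a one-line assembly of the ingredients.
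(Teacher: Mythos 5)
Your proposal is correct and takes essentially the same route as the paper: substitute Lemma~\ref{L2.4} and the centralizer formulas \eqref{2.13}, \eqref{2.14} into \eqref{Nqn}, note that each summand depends only on $(\lambda,(\boldsymbol{\lambda}_d)_{d\in D})$ (resp.\ with $t$) and not on the particular assignment $(\lambda_f)_{f\in I_d}$, and collapse the inner sum into the multiplicity factor $s(\boldsymbol{\lambda}_d)$. Your side remark that the multiplicity is really $\prod_{d\in D}s(\boldsymbol{\lambda}_d)$ is a fair reading of the same slight abuse of notation the paper itself uses.
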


\begin{proof}
Recall from \eqref{Nqn} that 
\begin{align*}
\frak N_{q,n}
=\,&\sum_{(\lambda,(\boldsymbol{\lambda}_d)_{d\in D})\in\Omega}\sum_{\substack{(\lambda_f)_{f\in I_d},\ \lambda_f\in\mathcal P,\cr (\lambda_f)_{f\in I_d}\ \text{is a permutation of}\ \boldsymbol{\lambda}_d}}\frac{\text{Fix}(\alpha)}{|c(\alpha)|}\cr
&+\sum_{\substack{(\lambda,(\boldsymbol{\lambda}_d)_{d\in D})\in\Omega\cr t\in T(\lambda)}}\sum_{\substack{(\lambda_f)_{f\in I_d},\ \lambda_f\in\mathcal P,\cr (\lambda_f)_{f\in I_d}\ \text{is a permutation of}\ \boldsymbol{\lambda}_d}}\frac{\text{Fix}(\beta)}{|c(\beta)|}.
\end{align*}
In the above, by Lemma~\ref{L2.4}, \eqref{2.13} and \eqref{2.14}, 
\[
\frac{\text{Fix}(\alpha)}{|c(\alpha)|}=\frac{q^{e_1(\lambda,(\boldsymbol{\lambda}_d)_{d\in D})}}{a_1(\lambda,(\boldsymbol{\lambda}_d)_{d\in D})}
\]
and 
\[
\frac{\text{Fix}(\beta)}{|c(\beta)|}=\frac{q^{e_2(t,\lambda,(\boldsymbol{\lambda}_d)_{d\in D})}}{a_2(t,\lambda,(\boldsymbol{\lambda}_d)_{d\in D})}.
\]
Both expressions depend on $\boldsymbol{\lambda}_d$ rather than $(\lambda_f)_{f\in I_d}$. Hence
\[
\frak N_{q,n}=\sum_{(\lambda,(\boldsymbol{\lambda}_d)_{d\in D})\in\Omega}s(\boldsymbol{\lambda}_d)\frac{q^{e_1(\lambda,(\boldsymbol{\lambda}_d)_{d\in D})}}{a_1(\lambda,(\boldsymbol{\lambda}_d)_{d\in D})}+\sum_{\substack{(\lambda,(\boldsymbol{\lambda}_d)_{d\in D})\in\Omega\cr t\in T(\lambda)}}s(\boldsymbol{\lambda}_d)\frac{q^{e_2(t,\lambda,(\boldsymbol{\lambda}_d)_{d\in D})}}{a_2(t,\lambda,(\boldsymbol{\lambda}_d)_{d\in D})}.
\]
\end{proof}

\section{An Asymptotic Formula for $\frak M_n$}

Recall that $\frak M_n$ is the number of $\text{AGL}(n,\f_2)$ orbits of $R(n,n)/R(1,n)$. By \cite[Theorem~5.1]{Hou-JA-1995}, $\frak M_n$ is also the number of AGL orbits of $R(n-2,n)$. The main result of this section is the following asymptotic formula for $\frak M_n$ as $n\to\infty$.

\begin{thm}\label{T3.1}
We have
\begin{equation}\label{3.1}
\lim_{n\to\infty}\frak M_n\cdot\frac{\prod_{i=1}^{\infty}(1-2^{-i})}{2^{2^n-n^2-2n-1}}=1.
\end{equation}
\end{thm}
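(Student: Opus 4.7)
The plan is to apply Burnside's lemma to the $\text{AGL}(n,\f_2)$-action on $R(n-2,n)$, which by \cite[Theorem~5.1]{Hou-JA-1995} has the same number of orbits as $R(n,n)/R(1,n)$. Thus
\[
\frak M_n=\frac{1}{|\text{AGL}(n,\f_2)|}\sum_{g\in\text{AGL}(n,\f_2)}|\text{Fix}(g)|,\qquad \text{Fix}(g)=\{f\in R(n-2,n):f\circ g=f\}.
\]
The strategy is to isolate the identity term, which will supply the claimed asymptotic, and to show that all remaining terms vanish after the normalization in \eqref{3.1}.

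For the identity, $|R(n-2,n)|=2^{2^n-n-1}$ and the standard formula $|\text{AGL}(n,\f_2)|=2^{n^2+n}\prod_{j=1}^n(1-2^{-j})$ yield an identity contribution to $\frak M_n$ equal to $2^{2^n-n^2-2n-1}/\prod_{j=1}^n(1-2^{-j})$. Multiplied by the normalization $\prod_{i=1}^\infty(1-2^{-i})/2^{2^n-n^2-2n-1}$, this becomes $\prod_{i=n+1}^\infty(1-2^{-i})$, which tends to $1$.

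For $g\ne e$ I would use the trivial bound $|\text{Fix}(g)|\le 2^{e(g)}$, where $e(g)$ is the number of $\langle g\rangle$-orbits in $\f_2^n$; this is immediate since any $g$-fixed function, of any degree, must be constant on those orbits. The key estimate is $e(g)\le 3\cdot 2^{n-2}$ whenever $g\ne\text{id}$: the set of $\f_2^n$-fixed points of a non-identity affine transformation is either empty or a proper affine subspace, so $\text{fix}(g)\le 2^{n-1}$; the remaining $2^n-\text{fix}(g)$ points lie in $\langle g\rangle$-orbits of size $\ge 2$, contributing at most $(2^n-\text{fix}(g))/2$ further orbits, giving $e(g)\le\text{fix}(g)/2+2^{n-1}\le 3\cdot 2^{n-2}$. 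Combined with $|\text{AGL}(n,\f_2)|\le 2^{n^2+n}$, the non-identity contribution to $\frak M_n\cdot\prod_{i=1}^\infty(1-2^{-i})/2^{2^n-n^2-2n-1}$ is at most $2^{n^2+2n+1-2^{n-2}}$, which tends to $0$.

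Combining the two estimates yields \eqref{3.1}. I do not anticipate a real obstacle: the only nontrivial ingredient is the elementary bound $e(g)\le 3\cdot 2^{n-2}$, and the crude union bound over non-identity elements already suffices because $2^{n-2}$ eventually dominates any polynomial in $n$.
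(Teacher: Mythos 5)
Your proposal is correct, and it proves the theorem by a genuinely more elementary route than the paper. The treatment of the identity term is identical (Burnside on $R(n-2,n)$ via \cite[Theorem~5.1]{Hou-JA-1995}, $|R(n-2,n)|=2^{2^n-n-1}$, $|\text{AGL}(n,\f_2)|=2^{n^2+n}\prod_{i=1}^n(1-2^{-i})$, leaving the tail $\prod_{i=n+1}^\infty(1-2^{-i})\to 1$), and in both arguments the whole issue is showing $\sum_{\mathrm{id}\ne\sigma}\text{Fix}(\sigma)=o(2^{2^n-n-1})$. The paper does this with compound matrices: it writes the matrix of $\sigma$ on $R(n-2,n)$ in the block form \eqref{3.5}, proves rank estimates for $C_r(J_m)-I$ (Lemmas~\ref{L3.1}--\ref{L3.4}), and runs a four-case analysis on the eigenvalues and elementary divisors of $A$, ending with $\text{Fix}(\sigma)<2^{2^n-\binom{\lfloor n/2\rfloor}{3}}$ uniformly for large $n$. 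You bypass all of that with the single observation that a fixed function must be constant on $\langle\sigma\rangle$-orbits, so $\text{Fix}(\sigma)\le 2^{e(\sigma)}$, together with $\text{fix}(\sigma)\le 2^{n-1}$ for $\sigma\ne\mathrm{id}$ (the fixed-point set is empty or a proper affine subspace), giving $e(\sigma)\le \text{fix}(\sigma)/2+2^{n-1}\le 3\cdot 2^{n-2}$; the exponential deficit $2^{n-2}$ then swamps the group size $2^{n^2+n}$ in a crude union bound, exactly as you compute. Your uniform bound $2^{3\cdot 2^{n-2}}$ is in fact sharper than the paper's worst-case bound $2^{2^n-\Theta(n^3)}$ for large $n$ (and is essentially attained by a transvection), so nothing is lost for this theorem. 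What the paper's heavier machinery buys is information that your bound discards: your estimate never uses the degree restriction, and it only works because $\dim R(n-2,n)=2^n-n-1$ is within $O(n)$ of $2^n$; for the analogous asymptotic questions about $\theta(n;s,r)$ with smaller $r$ (raised in Section 5), $2^{3\cdot 2^{n-2}}$ exceeds $|R(r,n)|$ and says nothing, whereas the compound-matrix rank bounds adapt to those quotients. For the statement at hand, your argument is complete and correct.
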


To prove this theorem, we need some preparatory results, mainly about compound matrices.

\subsection{Compound matrices and preparatory results}\

For $0\le r\le n$, let $\mathcal C^n_r$ denote the set of all subsets of $\{1,\dots,n\}$ of size $r$. Let $A$ be an $n\times n$ matrix (over any field). The $r$th {\em compound matrix} of $A$, denoted by $C_r(A)$, is the $\binom n r\times \binom nr$ matrix whose rows and columns are indexed by $\mathcal C^n_r$ and whose $(S,T)$-entry ($S,T\in\mathcal C^n_r$) is $\det A(S,T)$, where $A(S,T)$ is the submatrix of $A$ with row indices from $S$ and column indices from $T$. For general properties of compound matrices, see \cite[Chapter V]{Wedderburn-1964}. If the eigenvalues of $A$ are $\lambda_1,\dots,\lambda_n$ (counting multiplicity), then the eigenvalues of $C_r(A)$ are $\prod_{i\in S}\lambda_i$, $S\in\mathcal C^n_r$.

The quotient space $R(r,n)/R(r-1,n)$ has a basis $\{X_S:S\in\mathcal C^n_r\}$, where $X_S=\prod_{i\in S}X_i$. When $A\in\text{GL}(n,\f_2)$ acts on $R(r,n)/R(r-1,n)$, its matrix with respect to the basis $(X_S)_{S\in\mathcal C^n_r}$ of $R(r,n)/R(r-1,n)$, displayed in a row, is $C_r(A)$, i.e.,
\[
A((X_S)_{S\in\mathcal C^n_r})=(X_S)_{S\in\mathcal C^n_r} C_r(A);
\]
see \cite[\S 4]{Hou-DM-1996}. More generally, when $\sigma=\left[\begin{smallmatrix}A&0\cr a&1\end{smallmatrix}\right]\in\text{AGL}(n,\f_q)$ acts on $R(r,n)/R(s,n)$, $-1\le s<r\le n$, its matrix with respect to the basis $\{X_S:S\subset\{1,\dots,n\},\ s<|S|\le r\}$ is
\begin{equation}\label{CA}
\left[\begin{matrix}
C_r(A)&0&\cdots&0\cr
*&C_{r-1}(A)&\cdots&0\cr
\vdots &\vdots&\ddots&\vdots\cr
*&*&\cdots&C_{s+1}(A)\end{matrix}\right].
\end{equation}

\begin{lem}\label{L3.1}
Let $\f$ be a field and $\lambda_1,\dots,\lambda_n\in \f^*$ ($n\ge 1$) with $\lambda_1\ne 1$. Then for at least half of the subsets $S$ of $\{1,\dots,n\}$, $\prod_{i\in S}\lambda_i\ne 1$.
\end{lem}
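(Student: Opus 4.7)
The plan is to use a pairing (involution) argument on the power set of $\{1,\dots,n\}$. Since $\lambda_1$ is distinguished as the element unequal to $1$, I would pair subsets according to whether or not they contain the index $1$.

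Specifically, define the involution $\iota$ on the power set of $\{1,\dots,n\}$ by $\iota(S) = S \triangle \{1\}$ (symmetric difference with the singleton $\{1\}$). This is a fixed-point-free involution, so it partitions the $2^n$ subsets into $2^{n-1}$ pairs $\{S, \iota(S)\}$. In each such pair, exactly one of the two subsets contains $1$ and the other does not, so the two products $\prod_{i \in S} \lambda_i$ and $\prod_{i \in \iota(S)}\lambda_i$ differ by a factor of $\lambda_1^{\pm 1}$.

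The key observation is that in any such pair, at most one of the two products can equal $1$: if both equalled $1$, dividing would force $\lambda_1 = 1$, contradicting the hypothesis. Consequently, at most $2^{n-1}$ of the subsets $S$ satisfy $\prod_{i \in S}\lambda_i = 1$, so at least $2^{n-1} = \tfrac{1}{2} \cdot 2^n$ subsets satisfy $\prod_{i \in S}\lambda_i \ne 1$. There is no serious obstacle here; the proof is essentially a one-line pairing argument, and the only thing to be careful about is writing the involution cleanly and noting that the hypothesis $\lambda_i \in \f^*$ ensures the products are well-defined invertible elements so that ``differing by a factor of $\lambda_1$'' is meaningful.
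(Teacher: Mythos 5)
Your proof is correct and is essentially identical to the paper's: the paper pairs each $S\subset\{2,\dots,n\}$ with $\{1\}\cup S$, which is exactly your involution $S\mapsto S\,\triangle\,\{1\}$, and uses the same observation that at most one product in each pair can equal $1$.
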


\begin{proof}
For every $S\subset\{2,\dots,n\}$, at most one of $\prod_{i\in S}\lambda_i$ and $\prod_{i\in\{1\}\cup S}\lambda_i$ is $1$.
\end{proof} 

\begin{lem}\label{L3.2}
Let $A$ be an $m\times m$ matrix, $B$ be an $n\times n$ matrix, and $0\le k\le m$, $0\le l\le n$. Then $C_k(A)\otimes C_l(B)$ is a principal submatrix of $C_{k+l}(A\oplus B)$, where $A\oplus B=\left[\begin{smallmatrix}A&0\cr 0&B\end{smallmatrix}\right]$. $(C_0(A)$ is defined to be the $1\times 1$ identity matrix $[1]$.)
\end{lem}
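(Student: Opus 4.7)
The plan is to identify explicitly which principal submatrix of $C_{k+l}(A\oplus B)$ gives $C_k(A)\otimes C_l(B)$, and then verify the entries match by a direct block-diagonal determinant computation.

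First I would index things. The rows and columns of $C_{k+l}(A\oplus B)$ are indexed by $\mathcal C^{m+n}_{k+l}$. Consider the subfamily of index sets $R\subset\{1,\dots,m+n\}$ that split as $R=S\cup T'$ with $S\in\mathcal C^m_k$ and $T'\subset\{m+1,\dots,m+n\}$ of size $l$; writing $T=\{i-m:i\in T'\}\in\mathcal C^n_l$, this subfamily is parametrized by pairs $(S,T)\in\mathcal C^m_k\times\mathcal C^n_l$, so there are exactly $\binom mk\binom nl$ such sets, matching the size of $C_k(A)\otimes C_l(B)$. Let $M$ denote the principal submatrix of $C_{k+l}(A\oplus B)$ obtained by restricting rows and columns to this subfamily.

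Next I would compute the entries of $M$. For $R_1=S_1\cup T_1'$ and $R_2=S_2\cup T_2'$ in the subfamily, the submatrix $(A\oplus B)(R_1,R_2)$, with rows and columns listed in the natural order inherited from $\{1,\dots,m+n\}$ (so all indices from $S_i$ precede those from $T_i'$), is block diagonal:
\[
(A\oplus B)(R_1,R_2)=\left[\begin{matrix}A(S_1,S_2)&0\cr 0&B(T_1,T_2)\end{matrix}\right].
\]
Hence
\[
\det(A\oplus B)(R_1,R_2)=\det A(S_1,S_2)\cdot\det B(T_1,T_2),
\]
which is exactly the $(S_1,S_2)$-entry of $C_k(A)$ times the $(T_1,T_2)$-entry of $C_l(B)$, i.e.\ the $((S_1,T_1),(S_2,T_2))$-entry of $C_k(A)\otimes C_l(B)$. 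This shows $M=C_k(A)\otimes C_l(B)$, completing the proof.

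I do not expect a real obstacle here; the only subtlety is making sure the sign in the determinant computation is $+1$, which is why it is essential to list the chosen indices in their natural order so the block matrix really is block diagonal rather than a permuted version (which would only introduce a harmless sign but complicate the identification with the Kronecker product).
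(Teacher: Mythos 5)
Your proof is correct and follows essentially the same route as the paper: restrict to the principal submatrix indexed by the sets $S\cup T'$ with $S\in\mathcal C^m_k$, $T\in\mathcal C^n_l$, observe that $(A\oplus B)(S_1\cup T_1',S_2\cup T_2')$ is block diagonal with blocks $A(S_1,S_2)$ and $B(T_1,T_2)$, and conclude that the entry is $\det A(S_1,S_2)\det B(T_1,T_2)$, giving the Kronecker product. Your extra remark about keeping the indices in natural order (so no sign issue arises) is exactly the point the paper conveys with its figure.
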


\begin{proof}
For $T\subset\{1,\dots,n\}$, let $T'=\{m+j:j\in T\}$. Let $C$ be the principal submatrix of $C_{k+l}(A\oplus B)$ labeled by all $S\cup T'$ with $S\in\mathcal C^m_k$ and $T\in\mathcal C^n_l$. For $S_1,S_2\in\mathcal C^m_k$ and $T_1,T_2\in\mathcal C^n_l$, the $(S_1\cup T_1',S_2\cup T_2')$-entry of $C_{k+l}(A\oplus B)$ is 
\[
\det[(A\oplus B)(S_1\cup T_1',S_2\cup T_2')]=\det(A(S_1,S_2))\det(B(T_1,T_2));
\]
see Figure~1.
\begin{figure}
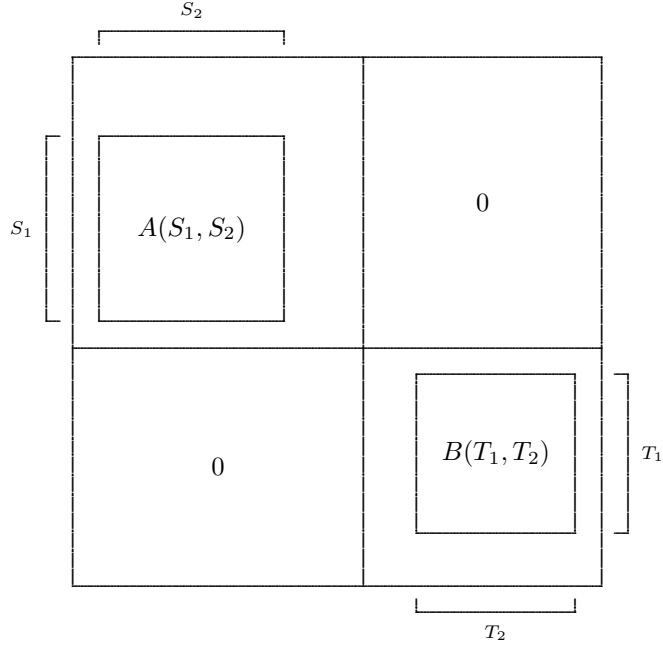
\label{F1} 
\[
\beginpicture
\setcoordinatesystem units <1em,1em> point at 0 0

\plot 0 0  20 0  20 20  0 20  0 0 /
\plot 11 0  11 20 /
\plot 0 9  20 9 /
\plot 1 10  8 10  8 17  1 17  1 10 /
\plot 13 2  19 2  19 8  13 8  13 2 / 
\plot -0.5 10  -1 10  -1 17  -0.5 17 /
\plot 20.5 2  21 2  21 8  20.5 8 /
\plot 1 20.5  1 21  8 21  8 20.5 /
\plot 13 -0.5  13 -1  19 -1  19 -0.5 /
 
\put {$A(S_1,S_2)$}  at 4.5 13.5
\put {$B(T_1,T_2)$}  at 16 5
\put {0} at 5.5 4.5
\put {0} at 15.5 14.5

\put{$\scriptstyle S_1$} [r] at -1.5 13.5
\put{$\scriptstyle S_2$} [b] at 4.5 21.5
\put{$\scriptstyle T_2$} [t] at 16 -1.5
\put{$\scriptstyle T_1$} [l] at 21.5 5
\endpicture
\]
\caption{$A(S_1,S_2)$ and $B(T_1,T_2)$ in $A\oplus B$}
\end{figure}
Hence $C=C_k(A)\otimes C_l(B)$.
\end{proof}

\begin{lem}\label{L3.3}
Let $J_n$ be given by \eqref{2.6.0}. Then
\begin{equation}\label{CrJn}
C_r(J_n)=\left[\begin{matrix} C_r(J_{n-1})&*\cr 0&C_{r-1}(J_{n-1})\end{matrix}\right],
\end{equation}
where the rows and columns of $C_r(J_{n-1})$ are labeled by $\mathcal C^{n-1}_r$ and the rows and columns of $C_{r-1}(J_{n-1})$ are labeled by $\{S\cup\{n\}:S\in\mathcal C^{n-1}_{r-1}\}$.
\end{lem}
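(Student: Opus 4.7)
I would verify the asserted block structure by a direct case analysis of the entries of $C_r(J_n)$, organized by whether $n\in S$ and whether $n\in T$. Two structural features of $J_n=I+N_n$ drive the argument: (a) the last row of $J_n$ has a single nonzero entry, the $1$ at position $(n,n)$; and (b) the top-left $(n-1)\times(n-1)$ principal submatrix of $J_n$ is exactly $J_{n-1}$, since the only superdiagonal $1$ lost by deleting row and column $n$ is the entry at position $(n-1,n)$.

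Next, I would partition $\mathcal{C}^n_r$ into $\mathcal{C}^{n-1}_r$ (subsets avoiding $n$) and $\{S_0\cup\{n\}:S_0\in\mathcal{C}^{n-1}_{r-1}\}$ (subsets containing $n$), and order the rows and columns of $C_r(J_n)$ so that the first group precedes the second; this immediately puts $C_r(J_n)$ in $2\times 2$ block form of the claimed shape. Four cases then determine the blocks. If $n\notin S\cup T$, the submatrix $J_n(S,T)$ lives inside the top-left $(n-1)\times(n-1)$ block of $J_n$, which by (b) equals $J_{n-1}$; this yields the top-left block $C_r(J_{n-1})$. If $n\in S$ and $n\notin T$, the row of $J_n(S,T)$ labeled by $n$ has entries $(J_n)_{n,t}$ with $t\in T\subseteq\{1,\dots,n-1\}$, all zero by (a), so $\det J_n(S,T)=0$; this gives the bottom-left zero block. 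If $n\in S\cap T$, writing $S=S_0\cup\{n\}$, $T=T_0\cup\{n\}$, the row labeled by $n$ has by (a) a single $1$ in the column labeled by $n$, and cofactor expansion along that row produces $\det J_n(S_0,T_0)$; applying the first case to $(S_0,T_0)$ gives $\det J_{n-1}(S_0,T_0)$, which is the desired entry of $C_{r-1}(J_{n-1})$. The remaining case $n\notin S$, $n\in T$ furnishes the $*$ block and needs no verification.

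There is no substantive obstacle; the only point requiring care is the sign in the cofactor expansion of the case $n\in S\cap T$, which is $+1$ because $n$ is the maximum element of both $S$ and $T$, so in the standard ordering the single nonzero entry of the $n$-indexed row sits in the bottom-right corner of the submatrix.
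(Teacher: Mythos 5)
Your proposal is correct and follows essentially the same route as the paper: a case analysis of the $(S,T)$-entries according to whether $n$ lies in $S$ and/or $T$, using that $J_n=\left[\begin{smallmatrix} J_{n-1}&*\cr 0&1\end{smallmatrix}\right]$ so that the three relevant cases give $\det J_{n-1}(S,T)$, $\det J_{n-1}(S',T')$, and $0$ respectively. Your extra remark about the $+1$ sign in the cofactor expansion is a fine (implicit in the paper's block-triangular determinant) detail, and nothing is missing.
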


\begin{proof}
We have
\[
J_n=\left[\begin{matrix} J_{n-1}&*\cr 0&1\end{matrix}\right].
\]
Let $S,T\in\mathcal C^n_r$. We show that $\det J_n(S,T)$, the $(S,T)$-entry of $C_r(J_n)$, equals the corresponding entry in the right side of \eqref{CrJn}.

If $n\notin S$ and $n\notin T$, then $J_n(S,T)=J_{n-1}(S,T)$, whence $\det J_n(S,T)=\det J_{n-1}(S,T)$.

If $n\in S$ and $n\in T$, write $S=S'\cup\{n\}$ and $T=T'\cup\{n\}$, where $S',T'\in\mathcal C^{n-1}_{r-1}$. Then 
\[
J_n(S,T)=\left[\begin{matrix}J_{n-1}(S',T')&*\cr 0&1\end{matrix}\right],
\]
whence $\det J_n(S,T)=\det J_{n-1}(S',T')$.

If $n\in S$ and $n\notin T$, then the last row of $J_n(S,T)$ is 0, whence $\det J_n(S,T)=0$.
\end{proof}

\begin{lem}\label{L3.4}
We have
\[
\text{\rm rank}(C_r(J_n)-I)\ge\binom{n-1}r,\quad r\ge 1.
\]
\end{lem}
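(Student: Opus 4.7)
The plan is to induct on $n$, leveraging the block upper triangular decomposition of $C_r(J_n)$ provided by Lemma~\ref{L3.3}. Subtracting $I$ from both sides of that decomposition yields
$$C_r(J_n)-I=\left[\begin{matrix} C_r(J_{n-1})-I & * \cr 0 & C_{r-1}(J_{n-1})-I\end{matrix}\right].$$
The key elementary observation I would record first is that for any block upper triangular matrix $M=\left[\begin{smallmatrix}A&*\cr 0&B\end{smallmatrix}\right]$, one has $\text{\rm rank}(M)\ge\text{\rm rank}(A)+\text{\rm rank}(B)$. Indeed, if $v_1,\dots,v_a$ is a basis of the column space of $A$, the vectors $\left[\begin{smallmatrix}v_i\cr 0\end{smallmatrix}\right]$ lie in the column space of $M$; and if we select $b=\text{\rm rank}(B)$ columns of $M$ whose bottom blocks form a basis of the column space of $B$, we obtain $b$ additional column-space vectors of $M$. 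These $a+b$ vectors are linearly independent because the first batch has zero bottom blocks. Applied to our decomposition, this gives the recursive inequality
$$\text{\rm rank}(C_r(J_n)-I)\ge\text{\rm rank}(C_r(J_{n-1})-I)+\text{\rm rank}(C_{r-1}(J_{n-1})-I).$$

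With this inequality in hand, I would induct on $n$. The base case $n=1$ forces $r=1$, and $C_1(J_1)-I=[0]$ has rank $0=\binom{0}{1}$. For the inductive step at level $n$, I would handle $r=1$ directly via $C_1(J_n)-I=J_n-I=N_n$, which has rank $n-1=\binom{n-1}{1}$. For $r\ge 2$, both $r$ and $r-1$ are at least $1$, so the inductive hypothesis at level $n-1$ applies to each diagonal block and, combined with Pascal's rule, yields
$$\text{\rm rank}(C_r(J_n)-I)\ge\binom{n-2}{r}+\binom{n-2}{r-1}=\binom{n-1}{r}.$$

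The only mildly subtle point---and the reason $r=1$ must be singled out rather than folded into the recursion---is that the recursive step at $r=1$ would require $\text{\rm rank}(C_0(J_{n-1})-I)\ge\binom{n-2}{0}=1$, which is false since $C_0(J_{n-1})-I=[0]$. Handling $r=1$ by the direct computation of $\text{\rm rank}(N_n)$ bypasses this obstruction, and the remainder of the argument is straightforward block algebra combined with Pascal's identity; I do not anticipate any serious obstacle beyond being careful about the starting point of the induction in $r$.
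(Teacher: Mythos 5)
Your proof is correct and follows essentially the same route as the paper: both use the block-triangular decomposition from Lemma~\ref{L3.3} to get $\mathrm{rank}(C_r(J_n)-I)\ge\mathrm{rank}(C_r(J_{n-1})-I)+\mathrm{rank}(C_{r-1}(J_{n-1})-I)$, anchor the recursion with $\mathrm{rank}(C_1(J_n)-I)=\mathrm{rank}(N_n)=n-1$, and finish with Pascal's identity. The only difference is bookkeeping (you organize the induction on $n$ with $r=1$ handled directly, the paper phrases it as induction on $r$), and your explicit treatment of the failed $r=1$ recursion step is a nice clarification of a point the paper leaves implicit.
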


\begin{proof}
Let $\rho(n,r)=\text{rank}(C_r(J_n)-I)$. By Lemma~\ref{L3.3}, we have 
\[
\begin{cases}
\rho(n,r)\ge \rho(n-1,r-1)+\rho(n-1,r),\cr
\rho(n,r)=0\ \text{unless}\ 1\le r\le n-1,\cr
\rho(n,1)=n-1\ \text{for}\ n\ge 1.
\end{cases}
\]
Using these conditions, it follows by induction on $r$ that $\rho(n,r)\ge\binom{n-1}r$ for $r\ge 1$.
\end{proof}

\subsection{Proof of Theorem~\ref{T3.1}}\

We are now ready to prove Theorem~\ref{T3.1}. First, recall that 
\begin{equation}\label{3.2}
|\text{AGL}(n,\f_2)|=(2^n-2^0)(2^n-2^1)\cdots(2^n-2^{n-1})\cdot 2^n=2^{n^2+n}\prod_{i=1}^n(1-2^{-i})
\end{equation}
and 
\begin{equation}\label{3.3}
|R(n-2,n)|=2^{2^n-n-1}.
\end{equation}
For $\sigma\in\text{AGL}(n,\f_2)$, let $\text{Fix}(\alpha)$ be the number of fixed points of $\sigma$ in $R(n-2,n)$. By Burnside's lemma,
\begin{align*}
\frak M_n\,&=\frac 1{|\text{AGL}(n,\f_2)|}\sum_{\sigma\in\text{AGL}(n,\f_2)}\text{Fix}(\sigma)\cr
&=\frac 1{|\text{AGL}(n,\f_2)|}\Bigl(|R(n-2,n)|+\sum_{\text{id}\ne\sigma\in\text{AGL}(n,\f_2)}\text{Fix}(\sigma)\Bigr),
\end{align*}
where
\[
\frac{|R(n-2,n)|}{|\text{AGL}(n,\f_2)|}=\frac{2^{2^n-n^2-2n-1}}{\prod_{i=1}^n(1-2^{-i})}.
\]
Therefore, to prove \eqref{3.1}, it suffices to show that
\begin{equation}\label{3.4}
\sum_{\text{id}\ne\sigma\in\text{AGL}(n,\f_2)}\text{Fix}(\sigma)=o(|R(n-2,n)|)=o(2^{2^n-n-1}).
\end{equation}

Let $\text{id}\ne\sigma=\left[\begin{smallmatrix} A&0\cr a&1\end{smallmatrix}\right]\in\text{AGL}(n,\f_2)$. By \eqref{CA}, the matrix of $\sigma$ with respect to the basis $\{X_S:S\subset\{1,\dots,n\},\ |S|\le n-2\}$ of $R(n-2,n)$ is
\begin{equation}\label{3.5}
\mathcal A=\left[\begin{matrix}
C_{n-2}(A)&0&\cdots&0\cr
*&C_{n-3}(A)&\cdots&0\cr
\vdots &\vdots&\ddots&\vdots\cr
*&*&\cdots&C_0(A)\end{matrix}\right].
\end{equation}
Note that 
\begin{equation}\label{3.6}
\text{Fix}(\sigma)=2^{\text{null}(\mathcal A-I)}.
\end{equation}
We estimate $\text{Fix}(\sigma)$ in several cases.

\medskip
{\bf Case 1.} Assume that $A$ has an eigenvalue $\ne 1$.

By Lemma~\ref{L3.1}, the algebraic multiplicity of the eigenvalue $1$ of $\mathcal A$ is $\le \frac 12\sum_{r=0}^{n-2}\binom nr<2^{n-1}$. Hence
\begin{equation}\label{3.7}
\text{Fix}(\sigma)<2^{2^{n-1}}.
\end{equation}

\medskip
{\bf Case 2.} Assume that $1$ is the only eigenvalue of $A$ and $A$ has an elementary divisor $(X-1)^m$ with $m\ge \lfloor n/2\rfloor+1$.

We may assume that $A=J_m\oplus A_1$ for some $A_1\in\text{GL}(n-m,\f_2)$. By Lemma~\ref{L3.2}, $C_r(J_m)$ is a principal matrix of $C_r(A)$ for all $0\le r \le m$. Thus by \eqref{3.5} and Lemma~\ref{L3.4},
\begin{align*}
\text{rank}(\mathcal A-I)\,&\ge\sum_{r=0}^{n-2}\text{rank}(C_r(A)-I)\ge\text{rank}(C_3(A)-I)\cr
&\ge\text{rank}(C_3(J_m)-I)\ge\binom{m-1}3\ge\binom{\lfloor n/2\rfloor}3.
\end{align*}
Hence
\begin{equation}\label{3.8}
\text{Fix}(\sigma)\le 2^{\binom n0+\cdots+\binom n{n-2}-\binom{\lfloor n/2\rfloor}3}<2^{2^n-\binom{\lfloor n/2\rfloor}3}.
\end{equation}

\medskip
{\bf Case 3.} Assume that $1$ is the only eigenvalue of $A$ and $A$ has an elementary divisor $(X-1)^m$ with $2\le m\le \lfloor n/2\rfloor$. 

Again, we may assume that $A=J_m\oplus A_1$ for some $A_1\in\text{GL}(n-m,\f_2)$. By Lemma~\ref{L3.2}, $C_1(J_m)\otimes C_3(A_1)$ is a principal submatrix of $C_4(A)$. Since 
\[
C_1(J_m)\otimes C_3(A_1)=\left[\begin{matrix}
C_3(A_1)&\kern-0.8em C_3(A_1)\cr
&\cdot&\cdot\cr
&&\cdot&\kern0.8em \cdot\cr
&&&\kern0.8em \cdot&C_3(A_1)\cr
&&&&C_3(A_1)\end{matrix}\right]_{m\times m\ \text{blocks}}
\]
and $m\ge 2$, we have
\[
\text{rank}(C_1(J_m)\otimes C_3(A_1)-I)\ge\text{rank}\, C_3(A_1)=\binom{n-m}3\ge\binom{n/2}3.
\]
Therefore,
\[
\text{rank}(\mathcal A-I)\ge\text{rank}(C_4(A)-I)\ge \text{rank}(C_1(J_m)\otimes C_3(A_1)-I)\ge \binom{n/2}3,
\]
and hence
\begin{equation}\label{3.9}
\text{Fix}(\sigma)<2^{2^n-\binom{n/2}3}.
\end{equation}

\medskip
{\bf Case 4.} Assume that $A=I$ but $a\ne(0,\dots,0)$.

We may assume that $a=(0,\dots,0,1)$, i.e.,
\[
\sigma(x)=x+(0,\dots,0,1)\quad\text{for all}\ x\in\f_2^n.
\]
In this case, for $f\in R(n-2,n)$,
\[
\sigma(f)=f \Leftrightarrow f=f(X_1,\dots,X_{n-1})\in R(n-2,n-1).
\]
Thus
\begin{equation}\label{3.10}
\text{Fix}(\sigma)=|R(n-2,n-1)|=2^{2^{n-1}-1}.
\end{equation}

\medskip
In all four cases, we always have
\[
\text{Fix}(\sigma)<2^{2^n-\binom{\lfloor n/2\rfloor}3}\quad \text{for $n$ sufficiently large}.
\]
Therefore
\[
\sum_{\text{id}\ne\sigma\in\text{AGL}(n,\f_2)}\text{Fix}(\sigma)<|\text{AGL}(n,\f_2)|2^{2^n-\binom{\lfloor n/2\rfloor}3}\le 2^{n^2+n+2^n-\binom{\lfloor n/2\rfloor}3}=o(2^{2^n-n-1}).
\]
This completes the proof of Theorem~\ref{T3.1}.

\section{Final Remarks}

In general, let $\theta(n;s,t)$ denote the number of AGL orbits of $R(r,n)/R(s-1,n)$, $0\le s\le r\le n$. (Thus $\frak N_{2,n}=\theta(n;0,n)$ and $\frak M_n=\theta(n;0,n-2)$.) Since $\theta(n;s,r)=\theta(n;n-r,n-s)$ \cite[Theorem~5.1]{Hou-JA-1995}, we may assume that $s+r\le n$, that is, $0\le s\le r\le n-s$. In this range, $\theta(n;s,r)$ is numerically computed for $n\le 10$ \cite{Zeng-Yang-arXiv1912.11189} and is theoretically determined for $r\le 2$ (linear and quadratic functions) and for $(s,r)=(0,n)$ (this paper). It appears that with due effort, $\theta(n;0,n-1)$ can also be determined theoretically. For other values of $(s,r)$, explicit formulas for $\theta(n;s,r)$ appears to be out of immediate reach. For example, to determine $\theta(n;3,3)$, one needs to know $\text{null}(C_3(A)-I)$ for every $A\in\text{GL}(n,\f_2)$ in a canonical form under conjugation, or one needs to know the classification of cubic forms over $\f_2$; the former is difficult and the latter is probably impossible.

As for the asymptotics, we have only solved the question for $\theta(n;0,n-2)$. However, it seems that the method should work for all $(s,r)$.

\section{Conclusion}

We derived an explicit formula for the number of equivalence classes of functions from $\f_q^n$ to $\f_q$ under the action of the affine linear group $\text{AGL}(n,\f_q)$. These numbers are enormous unless both $q$ and $n$ are small, hence complete classification of functions from $\f_q^n$ to $\f_q$ is not practical. However, the group theoretic approach in the paper may lead to solutions of similar problems. We also proved an asymptotic formula for the number of equivalence classes of cosets of the first order Reed-Muller code under the action of  $\text{AGL}(n,\f_2)$. The asymptotic formula indicates that for most cosets of the first order Reed-Muller code, the subgroup of  $\text{AGL}(n,\f_2)$ that stabilizes them is trivial.

\section*{Appendix}

\centerline{List of Notation}

\medskip

\begin{tabbing}
\= $\oplus $ \kern3.1cm \= $A\oplus B=\left[\begin{smallmatrix}A\cr &B\end{smallmatrix}\right]$ \\
\> $\boxplus $ \> $\left[\begin{smallmatrix}A\cr a&1\end{smallmatrix}\right]\boxplus \left[\begin{smallmatrix}B\cr b&1\end{smallmatrix}\right]=\left[\begin{smallmatrix}A\cr &B\cr  a&b&1\end{smallmatrix}\right]$ \\

\> \> \\

\> $\mathcal A $ \> \eqref{3.5} \\
\> $a_1(\lambda,(\boldsymbol{\lambda}_d)_{d\in D}) $ \> $|c(\alpha)|$, formula in \eqref{2.13} \\
\> $a_2(t,\lambda,(\boldsymbol{\lambda}_d)_{d\in D}) $ \> $|c(\beta)|$, formula in \eqref{2.14} \\
\> $A(S,T) $ \> submatrix of $A$ with row (column) indices in $S$ ($T$) \\
\> $\text{AGL}(n,\f_q)$ \>  $\{\left[\begin{smallmatrix}A&0\cr a&1\end{smallmatrix}\right]:A\in\text{GL}(n,\f_q),\ a\in\f_q^n\}$, affine linear group \\
\> $b_1(\lambda,(\boldsymbol{\lambda}_d)_{d\in D}) $ \> $o(\alpha)$, formula in \eqref{2.20} \\
\> $b_2(t,\lambda,(\boldsymbol{\lambda}_d)_{d\in D}) $ \> $o(\beta)$, formula in \eqref{2.21} \\
\> $\mathcal C $ \> set of representatives of conjugacy classes of $\text{AGL}(n,\f_q)$ \\
\> $\mathcal C_1,\mathcal C_2 $ \> defined in \eqref{C1} and \eqref{C2} \\
\> $\mathcal C^n_r $ \> set of subsets of $\{1,\dots,n\}$ of size $r$ \\
\> $C_r(A) $ \> $r$th compound matrix of $A$ \\
\> $c(\alpha) $ \> centralizer of $\alpha$ in $\text{AGL}(n,\f_q)$ \\
\> $D $ \> $\{d>1:d \mid q^i-1\ \text{for some}\ 1\le i\le n\}$ \\
\> $e_1(\lambda,(\boldsymbol{\lambda}_d)_{d\in D}) $ \> defined by $\text{Fix}(\alpha)=q^{e_1(\lambda,(\boldsymbol{\lambda}_d)_{d\in D})}$, formula in \eqref{2.22} \\
\> $e_2(t,\lambda,(\boldsymbol{\lambda}_d)_{d\in D}) $ \> defined by $\text{Fix}(\beta)=q^{e_2(t,\lambda,(\boldsymbol{\lambda}_d)_{d\in D})}$, formula in \eqref{2.23} \\
\> $f^{\lambda} $ \>  $\{\underbrace{f^1,\dots,f^1}_{\lambda_1},\underbrace{f^2,\dots,f^2}_{\lambda_2},\dots\}$, where $\lambda=(\lambda_1,\lambda_2,\dots)$ \\
\> $\mathcal F(\f_q^n,\f_q)$  \>  set of functions from $\f_q^n$ to $\f_q$\\
\> $\text{Fix}(\alpha)$ (\S3) \> number of fixed points of $\alpha$ in $\mathcal F(\f_q^n,\f_q)$ \\
\> $\text{Fix}(\sigma)$ (\S4) \> number of fixed points of $\sigma$ in $R(n-2,n)$ \\
\> $\text{fix}(\alpha)$ \> number of fixed points of $\alpha$ in $\f_q^n$ \\
\> $I$ \> identity matrix \\
\> $\mathcal I $ \> set of monic irreducible polynomials in $\f_q[X]\setminus\{X\}$ \\
\> $I_d $ \> $\{f\in\mathcal I:\text{ord}\, f=d\}$ \\
\> $\text{id} $ \> identity of the affine linear group \\
\> $\frak M_n $ \> number of AGL orbits of $R(n,n)/R(1,n)$ \\
\> $\frak m(\lambda) $ \> $\max\{i:\lambda_i>0\}$, where $\lambda=(\lambda_1,\lambda_2,\dots)\in\mathcal P$ \\
\> $\frak m(\boldsymbol{\lambda}) $ \> $\max_{1\le i\le\psi(d)}\frak m(\boldsymbol{\lambda}^{(i)})$, where $\boldsymbol{\lambda}=(\boldsymbol{\lambda}^{(1)},\dots,\boldsymbol{\lambda}^{(\psi(d))})\in\Lambda_d$ \\
\> $\frak N_{q,n} $ \> number of AGL orbits of $\mathcal F(\f_q^n,\f_q)$ \\
\> $\text{null}(A) $ \> nullity of $A$ \\
\> $o(\ ) $  (\S3)\> order of a group element \\
\> $o(\ ) $ (\S4) \> little-$o$ asymptotic \\
\> $o_d(q) $ \> multiplicative order of $q$ in $\Bbb Z/d\Bbb Z$ \\
\> $\text{ord}\, f $ \> order of $f\in\mathcal I$ \\
\> $\mathcal P $ \> set of all partitions \\
\> $R_q(r,n)$ \>  $\{f\in\mathcal F(\f_q^n,\f_q):\deg f\le r\}$, $q$-ary Reed-Muller code\\
\> $R(r,n)$ \>  $R_2(r,n)$, binary Reed-Muller code\\
\> $s(\boldsymbol{\lambda}) $ \> number of permutations of $\boldsymbol{\lambda}=(\boldsymbol{\lambda}^{(1)},\dots,\boldsymbol{\lambda}^{(\psi(d))})\in\Lambda_d$ \\
\> $T(\lambda) $ \> $\{i:\lambda_i>0\}$, where $\lambda=(\lambda_1,\lambda_2,\dots)\in\mathcal P$ \\
\> $X_S $ \> $\prod_{i\in S}X_i$ \\

\> \> \\

\> $ \alpha, \beta$  \>  defined in \eqref{2.10} and \eqref{2.12} and protected in \S3\\
\> $\epsilon_t $ \> $(1,0,\dots,0)\in\f_q^t$ \\
\> $\epsilon(d,k) $ \> \eqref{2.18} \\
\> $\theta(n;s,r) $ \> number of AGL orbits of $R(r,n)/R(s-1,n)$  \\
\> $|\lambda| $ \> $\sum_{i\ge 1}i\lambda$, where $\lambda=(\lambda_1,\lambda_2,\dots)\in\mathcal P$ \\
\> $|\boldsymbol{\lambda}| $ \> $\sum_{i=1}^{\psi(d)}|\boldsymbol{\lambda}^{(i)}|$, where $\boldsymbol{\lambda}=(\boldsymbol{\lambda}^{(1)},\dots,\boldsymbol{\lambda}^{(\psi(d))})\in\Lambda_d$  \\
\> $\Lambda_d $ \> $\{(\boldsymbol{\lambda}^{(1)},\dots,\boldsymbol{\lambda}^{(\psi(d))}):\boldsymbol{\lambda}^{(i)}\in\mathcal P,\ \boldsymbol{\lambda}^{(1)}\le\cdots\le\boldsymbol{\lambda}^{(\psi(d))}\}$, $d\in D$  \\
\> $\nu(\ )$  \> $p$-adic order \\
\> $\sigma_{f^\lambda} $ \> matrix with elementary divisors $f^\lambda$ \\
\> $\sigma_\lambda $ \> \eqref{sig-lam} \\
\> $\sigma_{\lambda,t} $ \> \eqref{sig-lam-t} \\
\> $\phi $ \> Euler totient function \\
\> $\psi(d) $ \>  $\phi(d)/o_d(q)$ \\
\> $\Omega $ \> $\{(\lambda,(\boldsymbol{\lambda}_d)_{d\in D}):\lambda\in\mathcal P,\, \boldsymbol{\lambda}_d\in\Lambda_d,\,|\lambda|+\!\sum_{d\in D}\!o_d(q)|\boldsymbol{\lambda}_d|=n\}$ \\
\end{tabbing}



\end{document}